\documentclass[12pt]{article}

\usepackage{amsmath, amsthm, amssymb, mathrsfs, bm}
\usepackage{graphicx, subfig}
\usepackage{hyperref} 
\usepackage{booktabs}

\newtheorem{theorem}{Theorem}[section]
\newtheorem{definition}{Definition}[section]
\newtheorem{lemma}{Lemma}[section]
\newtheorem{problem}{Problem}[section]

\newtheorem{remark}{Remark}[section]

\graphicspath{{Figure/}}

\begin{document}

\title{Retrieval of acoustic sources from multi-frequency phaseless data}

\author{
Deyue Zhang\thanks{School of Mathematics, Jilin University, Changchun,
P. R. China. {\it dyzhang@jlu.edu.cn}}, 
Yukun Guo\thanks{Department of Mathematics, Harbin Institute of Technology, Harbin, P. R. China. {\it ykguo@hit.edu.cn} (Corresponding author)}, 
Jingzhi  Li\thanks{Department of Mathematics, Southern University of Science and Technology, Shenzhen, P. R. China. {\it li.jz@sustc.edu.cn}}\ \ and
Hongyu Liu\thanks{Department of Mathematics, Hong Kong Baptist University, Hong Kong SAR, P. R. China. {\it hongyuliu@hkbu.edu.hk}}
}

\maketitle

\begin{abstract}
This paper is concerned with the inverse source problem of reconstructing an unknown acoustic excitation from phaseless measurements of the radiated fields away at multiple frequencies. It is well known that the non-uniqueness issue is a major challenge associated with such an inverse problem. We develop a novel strategy to overcome this challenging problem by recovering the radiated fields via adding some reference point sources as extra artificial sources to the inverse source system. This novel reference source technique requires only a few extra data, and brings in a simple phase retrieval formula. The stability of this phase retrieval approach is rigorously analyzed. After the reacquisition of the phase information, the multi-frequency inverse source problem with recovered phase information is solved by the Fourier method, which is non-iterative, fast and easy to implement. Several numerical examples are presented to demonstrate the feasibility and effectiveness of the proposed method.
\end{abstract}

\noindent{\it Keywords}: inverse source problem, phaseless, Fourier method, Helmholtz equation, reference source, phase retrieval


\section{Introduction}

The inverse source problems concerning the reconstruction of an unknown source from measurements of the radiated field arise naturally in a broad range of scientific and engineering applications, such as antenna synthesis \cite{AKK91, DML07}, acoustic tomography \cite{AZML07, LU15} and medical imaging \cite{AM06, ABF02, Arr99, DLU17, HR98}. Recently, much attention \cite{AKK91, BC77, DML07, El1, El2, EV09, ZGLL17} has been focused on the inverse source problem of determining an acoustic source. Motivated by uniqueness and stability results, several numerical methods for solving the multi-frequency inverse source problem have been proposed. We refer interested readers to \cite{AHLS17, BLLT15, BLRX15, EV09, WGZL17, ZG15} for the sampling method, the continuation method, the eigenfunction expansion method and the Fourier method, etc.

Most existing reconstruction algorithms require the measurements of full data (both the intensity and phase information). However, in many practical applications, the measurements of the full data cannot be obtained, but instead only the intensity or modulus information of the data is available. This is particularly the case in the high-frequency regime. Hence, there are growing efforts devoted to the recovery with phaseless data. Recently,  in \cite{BLT11}, the authors presented the continuation method for an inverse source problem with the phaseless measurements. Several numerical methods for the inverse scattering from phaseless data can be found in \cite{BZ16, IK11, KR17, ZZ17}.

This paper is concerned with the two dimensional inverse problem of reconstructing an acoustic source from multi-frequency phaseless data. More precisely, we describe the mathematical model of the inverse problem in what follows.

Let $S$ be a source to the homogeneous Helmholtz equation
\begin{equation}\label{HelmholtzS}
 \Delta u +k^2u=S \quad \mathrm{in} \ \mathbb{R}^2,
\end{equation}
where $k\in \mathbb{R}_+$ is the wavenumber. Assume that $S$ is independent of $k$ and
\begin{equation}\label{SourceS}
S\in L^2(\mathbb{R}^2),\quad \mathrm{supp} S \subset\subset V_0,
\end{equation}
and the radiated field $u$ satisfies the Sommerfeld radiation condition
\begin{equation}\label{Sommerfeld}
\frac{\partial u}{\partial r}-\mathrm{i} ku=o\left(r^{-1/2}\right), \quad r=|x| \to \infty,
\end{equation}
where $V_0$ is a rectangle centered at the origin. Then, there is a unique solution $u$ satisfying the Helmholtz equation \eqref{HelmholtzS} and the radiation condition \eqref{Sommerfeld}.
In addition,
\begin{equation}\label{SolutionU}
u(x)=-\int_{V_0}\Phi_k(x,y)S(y)\,\mathrm{d} y, \quad \Phi_k(x,y):=\frac{\mathrm{i}}{4}H^{(1)}_0(k|x-y|),
\end{equation}
where $H^{(1)}_0$ is the Hankel function of the first kind of order zero. In the following, we also employ $u(x, k)$ to indicate the dependence of the radiated field $u(x)$ on the wavenumber $k$. Assume that the radiated fields are measured on the acquisition curve $\Gamma_R:=\{x \in \mathbb{R}^2: |x|=R\}$ and $V_0\subset\subset B_R:=\{x \in \mathbb{R}^2: |x|<R\}$. Then the inverse problem considered in this paper is stated as follows:

\begin{problem}[Phaseless multifrequency inverse source problem]\label{problem1}
Let $N\in \mathbb{N}_+$ and $\mathbb{K}_N$ be an admissible set consisting of a finite number of wavenumbers. Then the inverse source problem is to construct an approximation for the source $S(x)$ from the multifrequency phaseless data $\{|u(x, k)|: x\in\Gamma_R ,\ k\in\mathbb{K}_N\}$.
\end{problem}

There is a major difficulty in solving this problem: non-uniqueness, i.e., the approximation cannot be uniquely determined. This is due to the fact that the sources $S(x)$ and  $-S(x)$ produce the same phaseless data
\[
\left|\int_{V_0}\Phi_k(x,y)S(y)\,\mathrm{d} y\right|
\]
for all $k\in \mathbb{R}_+$ and  $x\in \mathbb{R}^2$.

This paper is devoted to overcoming this difficulty by decomposing  {\bf Problem} \ref{problem1} into two subproblems: first, recovering the phase information of the radiated fields and next reconstructing the source term. Motivated by the reference ball technique in inverse scattering problems \cite{LLZ09}, we develop a reference source technique to retrieve the phase information of the measured data.  To this end, we incorporate some artificially added point sources, respectively, as additional reference sources into the inverse source system, and derive a system of equations for the radiated fields, which can be solved directly. Moreover, we choose the point sources with suitable locations such that the absolute value of the determinant of the coefficient matrix admits a positive lower bound, which ensures the stability of the system. Then we can reconstruct the source from the recovered radiated fields by using the Fourier method developed in \cite{ZG15}.

The outline of this paper is as follows.  In the next section, we present the reference source technique and establish a phase retrieval formula for the radiated fields.  Section \ref{sec:stability} is devoted to analyzing the stability of the method. Section \ref{sec:fourier_method} serves as a brief review of the Fourier method. In Section \ref{sec:numerics}, we present several numerical examples to show the effectiveness and efficiency of the proposed method. Finally, some concluding remarks are included in Section \ref{sec:conclusion}.

\section{Reference source technique and phase retrieval formula}\label{sec:phase_retrieval}

We begin this section with some notation and definitions that are used in the paper. Without loss of generality, let
\[
V_0=(-a, a)^2, \quad  a>0.
\]
Let $m$ be a positive integer and $\vartheta_\mu=\mu\pi/m$ for $\mu\in \mathbb{R}$. Denote
\begin{align*}
B_j:=&\{r(\cos \theta, \sin\theta): 0\leq r\leq R,\ \vartheta_{2j-2} \leq\theta\leq \vartheta_{2j} \},\quad j=1,\cdots,m, \\
\Gamma_j:= & \{R(\cos \theta, \sin\theta): \vartheta_{2j-2} \leq\theta\leq \vartheta_{2j}\},\quad j=1,\cdots,m.
\end{align*}
Then, we have
\[
B_R=\bigcup\limits_{j=1}^m B_j\quad \mathrm{and}\quad \Gamma_R=\bigcup\limits_{j=1}^m\Gamma_j.
\]
The geometrical configuration is illustrated in Figure \ref{fig:Setting}.
\begin{figure}
\centering
\includegraphics[width=0.9\linewidth]{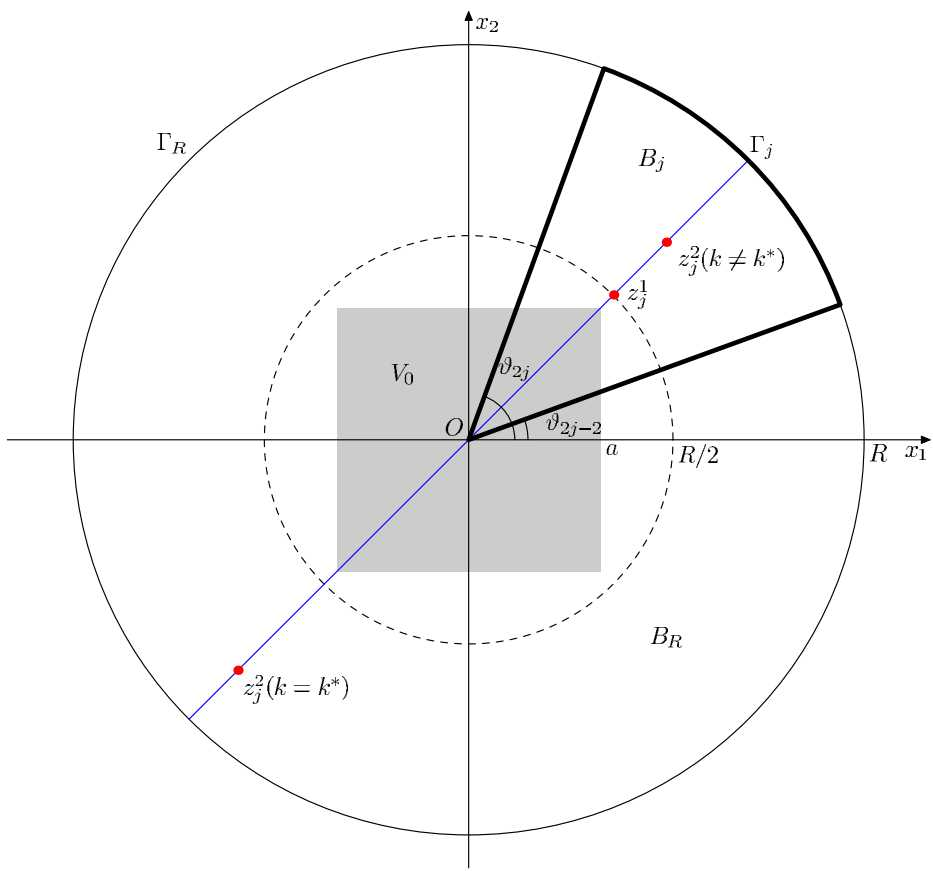}
\caption{An illustration of the geometrical setting for the reference source technique. The locations of $z_{j,\ell}, \ell=1,2$, are marked by small red points.}
\label{fig:Setting}
\end{figure}

For each $j$ ($j=1,\cdots,m$), take two points $z_{j,\ell}:=\lambda_{j,\ell} R(\cos\vartheta_{2j-1}, \sin\vartheta_{2j-1})$, where $\lambda_{j,\ell}\in\mathbb{R}, \sqrt{2}a/R\leq |\lambda_{j,\ell}|<1$ and $\ell=1,2$. Let $\delta_{j,\ell}$ ($j=1,\cdots, m,  \ell=1,2$)
be the Dirac distributions at points $z_{j,\ell}$, and introduce the scaling factors
\begin{equation}\label{ratio}
  c_{j,\ell, k}:=\frac{\|u(\cdot, k)\|_{j,\infty}}{\|\Phi_k(\cdot, z_{j,\ell})\|_{j,\infty}}, \quad
   j=1,\cdots, m, \quad \ell=1,2,
\end{equation}
where $\|\cdot\|_{j,\infty}:= \|\cdot\|_{L^\infty(\Gamma_j)}$.
Then $\Psi_{j,\ell}(x,k):=-c_{j,\ell,k}\Phi_k(x,z_{j,\ell})$ satisfies the following inhomogeneous Helmholtz equation
\[
 \Delta \Psi_{j,\ell} +k^2\Psi_{j,\ell}=c_{j,\ell,k}\delta_{j,\ell} \quad \mathrm{in} \ \mathbb{R}^2.
\]
Further, $v_{j,\ell}:=u+\Psi_{j,\ell}$ is the unique solution to the problem
\begin{equation*}
\begin{cases}
   \Delta v_{j,\ell} +k^2v_{j,\ell}=S+\delta_{j,\ell}, &\mathrm{in} \ \mathbb{R}^2,\\
   \dfrac{\partial v_{j,\ell}}{\partial r}-\mathrm{i} kv_{j,\ell}=o\left(r^{-1/2}\right),
 & r=|x| \rightarrow \infty.
\end{cases}
\end{equation*}

Since the phase retrieval technique is designated for the Fourier method \cite{ZG15} in this paper, we recall the following definition of admissible frequencies.
\begin{definition}[\cite{ZG15}]
Let $N\in \mathbb{N}_+$ and $k^*\in\mathbb{R}_+$ be a small wavenumber such that $0<k^*R<1$. Then the admissible set of wavenumbers is given by
\begin{equation*}
\mathbb{K}_N:=\left\{\frac{\pi}{a}|\bm{\ell}| : \bm{\ell}\in \mathbb{Z}^2, 1\leq |\bm{\ell}|_{\infty}\leq N\right\}\bigcup \{k^*\}.
\end{equation*}
\end{definition}

With these preparations, we introduce the phase retrieval problem in this paper:

\begin{problem}[Phase retrieval]
Given $N\in\mathbb{N}_+$ and the phaseless data
\begin{align*}
&\{|u(x,k)| : x\in\Gamma_R,\ k\in\mathbb{K}_N\},\\
& \{|v_{j,\ell}(x,k)| : x\in\Gamma_j,\ k\in\mathbb{K}_N\},\quad j=1,\cdots,m,\quad\ell=1,2,
\end{align*}
recover the radiated fields $\{u(x,k) : x\in\Gamma_R,\ k\in\mathbb{K}_N\}$.
\end{problem}

Now, we turn to the description of our phase retrieval method. For simplicity, we denote
\[
u_{j,k}(\cdot):=u(\cdot,k)|_{\Gamma_j},\quad v_{j,\ell,k}(\cdot):=v_{j,\ell}(\cdot,k)|_{\Gamma_j},\quad \ell=1,2,
\]
for each fixed $j=1,\cdots,m$ and $k\in\mathbb{K}_N$.

Since $H^{(1)}_{0}(z)=J_0(z)+\mathrm{i} Y_0(z)$ for $z\in \mathbb{C}\setminus\{0\}$, we have
\begin{eqnarray*}
{\rm Re}(\Psi_{j,\ell}(x, k))=\frac{c_{j,\ell, k}}{4}Y_0(kr_{j,\ell}),\quad
{\rm Im}(\Psi_{j,\ell}(x, k))=-\frac{c_{j,\ell, k}}{4}J_0(kr_{j,\ell}),
\end{eqnarray*}
where $r_{j,\ell}=|x-z_{j,\ell}|$, $J_{0}$ and $Y_0$ are the Bessel function of the
first and second kind of order 0, respectively. Then, by using the phaseless measurements
$|u_{j, k}|, |v_{j,1,k}|$ and $|v_{j,2,k}|$, we obtain
\begin{align}
&({\rm Re}(u_{j, k}))^2+({\rm Im}(u_{j, k}))^2=|u_{j, k}|^2, \label{EqnO1} \\
&\left({\rm Re}(u_{j, k})+\frac{c_{j,1,k}}{4}Y_0(kr_{j,1})\right)^2+\left({\rm Im}(u_{j, k})-\frac{c_{j,1,k}}{4}J_0(kr_{j,1})\right)^2=|v_{j,1,k}|^2, \label{EqnO2} \\
&\left({\rm Re}(u_{j, k})+\frac{c_{j,2,k}}{4}Y_0(kr_{j,2})\right)^2+\left({\rm Im}(u_{j, k})-\frac{c_{j,2,k}}{4}J_0(kr_{j,2})\right)^2=|v_{j,2,k}|^2. \label{EqnO3}
\end{align}
Further, by subtracting \eqref{EqnO1} from \eqref{EqnO2} and \eqref{EqnO3}, respectively, we have
\begin{align}
Y_0(kr_{j,1}){\rm Re}(u_{j,k}) - J_0(kr_{j,1}){\rm Im}(u_{j,k})  =  f_{j,1,k}, \label{EqnF1}\\
Y_0(kr_{j,2}){\rm Re}(u_{j,k}) - J_0(kr_{j,2}){\rm Im}(u_{j,k})  =  f_{j,2,k}, \label{EqnF2}
\end{align}
where
\[
 f_{j,\ell, k}=\frac{2}{c_{j,\ell,k}}\left(|v_{j,\ell,k}|^2-|u_{j,k}|^2\right)-\frac{c_{j,\ell,k}}{8}\left|H^{(1)}_{0}(kr_{j,\ell})\right|^2,
\quad \ell=1,2.
\]
Thus, by a simple calculation, we derive the  phase retrieval formula on $\Gamma_j$
\begin{equation}\label{formula}
  {\rm Re}(u_{j,k}) =\frac{\det(A_{j,k}^{\rm R})}{\det(A_{j,k})}, \quad
  {\rm Im}(u_{j,k}) = \frac{\det(A_{j,k}^{\rm I})}{\det(A_{j,k})},
\end{equation}
where the function matrices $A_{j, k}$, $A_{j,k}^{\rm R}$ and $A_{j,k}^{\rm I}$ are defined as follows
\begin{align*}
A_{j,k}=&
\begin{pmatrix}
    Y_0(kr_{j,1}) & -J_0(kr_{j,1}) \\
    Y_0(kr_{j,2}) & -J_0(kr_{j,2}) \\
\end{pmatrix}, \quad
A_{j,k}^{\rm R}=
\begin{pmatrix}
     f_{j,1,k} & -J_0(kr_{j,1}) \\
     f_{j,2,k} & -J_0(kr_{j,2}) \\
\end{pmatrix} \\
A_{j,k}^{\rm I}=&
\begin{pmatrix}
    Y_0(kr_{j,1}) &  f_{j,1,k}  \\
    Y_0(kr_{j,2}) &  f_{j,2,k}  \\
\end{pmatrix}
\end{align*}

Therefore, the radiated field $u_{j,k}$ can be recovered from
$u_{j,k}={\rm Re}(u_{j, k})+\mathrm{i} {\rm Im}(u_{j, k})$ for $j=1,\cdots,m$.

In this paper, the parameters $R, m$ and $\lambda_{j,\ell}$ for $\ell=1,2$ are taken as follows:
\begin{equation}\label{assumption}
\begin{array}{lr}
 m\geq10, \quad  \lambda_{j,1}=\dfrac{1}{2}, \quad k^*=\dfrac{\pi}{30a},\quad  \tau \geq 6,
\\
 \begin{cases}
    R=\tau a,  \quad\lambda_{j,2}=\dfrac{1}{2}+\dfrac{\pi}{2kR},  &  \mathrm{if}\ k\in \mathbb{K}_N\backslash\{k^*\}, \\
    R=6 a,  \quad\lambda_{j,2}= -\dfrac{3}{2},  &  \mathrm{if}\ k=k^*.
 \end{cases}
\end{array}
\end{equation}

The choice of the parameters \eqref{assumption} will make formula \eqref{formula} meaningful or the equations \eqref{EqnF1} and \eqref{EqnF2} uniquely solvable,
which will be discussed in the next section.

However, formulas \eqref{formula} cannot be used directly in computation since, in
general, only the noisy data can be measured. Therefore, at the end of this section,
we describe the algorithm with perturbed data.

\begin{table}[h]
\centering
\begin{tabular}{cp{.8\textwidth}}
\toprule
\multicolumn{2}{l}{{\bf Algorithm PR:}\quad Phase retrieval with reference point sources} \\
\midrule
 {\bf Step 1} & Take the parameters $R, m$ and $\lambda_{j,\ell}$ for $j=1,\cdots,m, \ell=1,2$ as in  \eqref{assumption};  \\
{\bf Step 2} &  Measure the noisy phaseless data $\{|u^\epsilon(x,k)|: x\in\Gamma_R,\ k\in\mathbb{K}_N\}$ and compute the scaling factors $c_{j,\ell,k}^\epsilon$ for $j=1,\cdots,m, \ell=1,2$ and $k\in\mathbb{K}_N$; \\
{\bf Step 3} & Introduce the reference point sources $c_{j,\ell,k}^\epsilon\delta_{j,\ell},\  j=1,\cdots,m, \ell=1,2$, respectively, to the inverse source system $S$, and collect the phaseless data $\{|v_{j,\ell}^\epsilon(x,k)|: x\in\Gamma_j,\ k\in\mathbb{K}_N\}$ for $j=1,\cdots,m, \ell=1,2$;\\
{\bf Step 4} & Recover the radiated field $\{u^\epsilon(x,k): x\in\Gamma_j,\ k\in\mathbb{K}_N\}$ for $j=1,\cdots,m$ from formula \eqref{formula}. \\
\bottomrule
\end{tabular}
\end{table}

\section{Stability of phase retrieval}\label{sec:stability}

In this section, we analyze the stability of the phase retrieval method. We start with the following asymptotic expansion of the Hankel functions $H^{(1)}_0(z)$.

\begin{lemma}\label{Expansion}
Let $t>0$, then the following estimate holds
\begin{equation}\label{Estimate}
  \left|H^{(1)}_0(t)-\sqrt{\frac{2}{\pi t}}e^{\mathrm{i} (t-\pi/4)}\right| \leq  \frac{t^{-3/2}}{4\sqrt{2\pi}}.
\end{equation}
\end{lemma}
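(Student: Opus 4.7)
The plan is to start from the classical Poisson-type integral representation
\begin{equation*}
H^{(1)}_0(t) = \sqrt{\frac{2}{\pi t}}\, e^{\mathrm{i}(t - \pi/4)} \cdot \frac{1}{\sqrt{\pi}} \int_0^\infty e^{-s} s^{-1/2} \left(1 + \frac{\mathrm{i} s}{2t}\right)^{-1/2} \mathrm{d} s,
\end{equation*}
which is a standard Hankel-type representation (cf.\ Watson's treatise) and, for $t>0$, is to be read with the principal branch of the square root so that the bracketed factor has positive real part. Since $\int_0^\infty e^{-s} s^{-1/2}\,\mathrm{d} s = \Gamma(1/2) = \sqrt{\pi}$, the leading asymptotic term $\sqrt{2/(\pi t)}\, e^{\mathrm{i}(t-\pi/4)}$ in \eqref{Estimate} corresponds precisely to replacing the bracketed factor by $1$. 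The problem therefore reduces to bounding the remainder
\begin{equation*}
H^{(1)}_0(t) - \sqrt{\frac{2}{\pi t}}\, e^{\mathrm{i}(t-\pi/4)} = \sqrt{\frac{2}{\pi t}}\, \frac{e^{\mathrm{i}(t-\pi/4)}}{\sqrt{\pi}} \int_0^\infty e^{-s} s^{-1/2} \left[\left(1 + \frac{\mathrm{i} s}{2t}\right)^{-1/2} - 1\right] \mathrm{d} s.
\end{equation*}

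The core analytic step will be the elementary complex inequality
\begin{equation*}
\left|(1 + \mathrm{i} y)^{-1/2} - 1\right| \leq \frac{y}{2}, \qquad y \geq 0,
\end{equation*}
which I would obtain by rationalizing
\begin{equation*}
(1 + \mathrm{i} y)^{-1/2} - 1 = \frac{-\mathrm{i} y}{\sqrt{1 + \mathrm{i} y}\,\bigl(1 + \sqrt{1 + \mathrm{i} y}\bigr)}
\end{equation*}
and noting that, for the principal branch, $|\sqrt{1+\mathrm{i} y}|^2 = |1+\mathrm{i} y| \geq 1$ and $\Re\sqrt{1+\mathrm{i} y} \geq 1$, so that $|1 + \sqrt{1 + \mathrm{i} y}| \geq 2$. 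Together these two factors contribute a denominator of modulus at least $2$, yielding the stated bound. This branch-of-the-square-root estimate is the only non-trivial step in the whole argument.

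Applying this inequality with $y = s/(2t)$ and inserting into the remainder integral gives
\begin{equation*}
\left|H^{(1)}_0(t) - \sqrt{\frac{2}{\pi t}}\, e^{\mathrm{i}(t - \pi/4)}\right| \leq \sqrt{\frac{2}{\pi t}} \cdot \frac{1}{\sqrt{\pi}} \cdot \frac{1}{4t} \int_0^\infty e^{-s} s^{1/2}\,\mathrm{d} s = \sqrt{\frac{2}{\pi t}} \cdot \frac{\Gamma(3/2)}{4\sqrt{\pi}\, t} = \frac{t^{-3/2}}{4\sqrt{2\pi}},
\end{equation*}
where I use $\Gamma(3/2) = \sqrt{\pi}/2$. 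The constant agrees with \eqref{Estimate} exactly, so no slack is lost. The principal obstacle is thus just the branch-sensitive rationalization inequality in the second paragraph; everything else is routine bookkeeping with standard gamma function values.
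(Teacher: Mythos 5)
Your proof is correct and follows essentially the same route as the paper: both start from the same Poisson-type integral representation, reduce the estimate to the pointwise bound $\left|\left(1+\frac{\mathrm{i} s}{2t}\right)^{-1/2}-1\right|\le \frac{s}{4t}$, and then integrate against $\mathrm{e}^{-s}s^{-1/2}$ using $\Gamma(3/2)=\sqrt{\pi}/2$ to land on the identical constant. The only difference is cosmetic, namely how that pointwise bound is obtained --- you rationalize and use $\Re\sqrt{1+\mathrm{i} y}\ge 1$, whereas the paper writes the difference as $-\frac{\mathrm{i} y}{2}\int_0^1\left(1+\mathrm{i} y\tau\right)^{-3/2}\mathrm{d}\tau$ and bounds the integrand by $1$.
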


\begin{proof}
From the integral representations of $H^{(1)}_{\nu}(z)$ (see \cite[Section 7.2]{Watson} and \cite[Chapter 7, Section 13.3]{Olv74})
and the assumption $t>0$, we have
\begin{equation}\label{Hankel1}
  H^{(1)}_{0}(t)=\sqrt{\frac{2}{\pi t}}\frac{\mathrm{e}^{\mathrm{i} (t-\pi/4)}}{\Gamma(\frac{1}{2})}\int_0^\infty \mathrm{e}^{-s}s^{-1/2}\left(1+\frac{\mathrm{i} s}{2t}\right)^{-1/2}\, \mathrm{d} s
\end{equation}
where $\Gamma(\cdot)$ denotes the Gamma function
\[
\Gamma(x):=\int_0^\infty\mathrm{e}^{-s}s^{x-1}\,\mathrm{d}s.
\]
Since
\begin{equation}\label{kernel1}
  \left(1+\frac{\mathrm{i} s}{2t}\right)^{-1/2}
=1-\frac{\mathrm{i} s}{4t}\int_0^1\left(1+\frac{\mathrm{i} s\tau}{2t}\right)^{-3/2}\,\mathrm{d} \tau.
\end{equation}
and $t>0$, $s\geq0$, $\tau\geq0$ in \eqref{kernel1}, it is readily to see
\begin{equation}\label{kernel2}
  \left|1+\frac{\mathrm{i} s\tau}{2t}\right|^{-3/2} \leq 1.
\end{equation}
Substituting \eqref{kernel1} into \eqref{Hankel1} and using \eqref{kernel2}, we deduce
\begin{align*}
 \!\!\! \left|H^{(1)}_{0}(t)-\sqrt{\frac{2}{\pi t}}e^{\mathrm{i} (t-\pi/4)}\right|
   \leq & \sqrt{\frac{2}{\pi t}}\frac{1}{\Gamma(\frac{1}{2})}
\int_0^\infty \mathrm{e}^{-s}s^{-1/2}\frac{s}{4t}\int_0^1\left|1+\frac{\mathrm{i} s\tau}{2t}\right|^{-3/2}\mathrm{d} \tau\mathrm{d} s \\
 \leq & \sqrt{\frac{2}{\pi t}}\frac{1}{4t\Gamma(\frac{1}{2})}
\int_0^\infty \mathrm{e}^{-s}s^{1/2}\mathrm{d} s \\
= & \sqrt{\frac{2}{\pi t}}\frac{\Gamma(\frac{3}{2})}{4t\Gamma(\frac{1}{2})} \\
= & \frac{t^{-3/2}}{4\sqrt{2\pi}},
\end{align*}
which leads to the estimate \eqref{Estimate}.  The proof is complete. 
\end{proof}

With Lemma \ref{Expansion}, we are in a position to derive an estimate on $\det(A_{j,k})$,  which shall be of critical importance in our subsequent stability analysis.

\begin{theorem}\label{det}
Under \eqref{assumption},  we have the following estimate
\begin{equation}\label{detAj}
  |\det(A_{j,k})| \geq 
\begin{cases}
\dfrac{M}{kR}, & \mathrm{if}\ k\in\mathbb{K}_N\backslash\{k^*\}, \\
 M^*,  & \mathrm{if}\ k=k^*,
\end{cases}
\quad j=1,\cdots, m,
\end{equation}
where $M= 1-\frac{7}{20\tau}$ and $M^*=\frac{4}{9}$.
\end{theorem}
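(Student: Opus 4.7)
The first step is to rewrite the determinant in a form that exposes the Hankel function $H^{(1)}_0 = J_0 + \mathrm{i} Y_0$ addressed by Lemma \ref{Expansion}. Expanding directly,
\[
\det(A_{j,k}) = J_0(kr_{j,1})Y_0(kr_{j,2}) - Y_0(kr_{j,1})J_0(kr_{j,2}) = -\mathrm{Im}\bigl[H^{(1)}_0(kr_{j,1})\,\overline{H^{(1)}_0(kr_{j,2})}\bigr],
\]
so it suffices to bound from below the imaginary part of a product of Hankel functions, uniformly in $x \in \Gamma_j$. The proof then splits along the two regimes prescribed by \eqref{assumption}: for $k \in \mathbb{K}_N\setminus\{k^*\}$ one has $k \geq \pi/a$ and Lemma \ref{Expansion} applies directly, while for $k = k^*$ one has $k^*R = \pi/5 < 1$ and small-argument expansions are required instead.

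For the first case, the plan is to substitute the asymptotic from Lemma \ref{Expansion} into each Hankel factor, writing $H^{(1)}_0(kr_{j,\ell}) = \sqrt{2/(\pi k r_{j,\ell})}\,e^{\mathrm{i}(kr_{j,\ell}-\pi/4)} + \rho_{j,\ell}$ with $|\rho_{j,\ell}| \leq (kr_{j,\ell})^{-3/2}/(4\sqrt{2\pi})$. The leading part of the product then has imaginary part $\frac{2}{\pi k \sqrt{r_{j,1} r_{j,2}}}\sin(k(r_{j,1} - r_{j,2}))$. The decisive geometric computation, carried out after rotating so that the bisector of $B_j$ is the positive $x$-axis and writing $x = R(\cos\theta, \sin\theta)$ with $|\theta| \leq \pi/m$, yields $r_{j,2}^2 - r_{j,1}^2 = R^2\eta(1 - 2\cos\theta + \eta)$ with $\eta := \pi/(2kR)$. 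From this identity one can show that $k(r_{j,1} - r_{j,2})$ stays close to $\pi/2$, so that $|\sin(k(r_{j,1}-r_{j,2}))|$ is bounded below by a quantity close to $1$. Combining this with elementary upper bounds on $\sqrt{r_{j,1}r_{j,2}}$ in terms of $R$ (using $\tau \geq 6$ and $m \geq 10$) and absorbing the remainders from Lemma \ref{Expansion} produces the explicit constant $M = 1 - 7/(20\tau)$ in the bound $M/(kR)$.

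For the second case $k = k^*$, the large-argument asymptotic is not useful, so I would use the small-argument series $J_0(t) = 1 + O(t^2)$ and $Y_0(t) = \frac{2}{\pi}[\ln(t/2) + \gamma]J_0(t) + O(t^2)$. The choice $\lambda_{j,2} = -3/2$ places $z_{j,2}$ on the opposite side of the origin from $z_{j,1}$, so an explicit evaluation for $x \in \Gamma_j$ (with $R = 6a$) shows $r_{j,2}/r_{j,1}$ is bounded below by a concrete constant near $5$. The determinant then reduces to $\frac{2}{\pi}J_0(k^*r_{j,1})J_0(k^*r_{j,2}) \ln(r_{j,2}/r_{j,1})$ plus remainders controlled by $k^*R = \pi/5$, from which the explicit constant $M^* = 4/9$ emerges after elementary arithmetic.

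The delicate point is the first case: showing that $|\sin(k(r_{j,1}-r_{j,2}))|$ stays sufficiently far from zero uniformly over $x \in \Gamma_j$ while the geometric approximations and the Lemma \ref{Expansion} remainders are simultaneously small enough not to erode the leading constant below $1 - 7/(20\tau)$. The very sharp explicit form of $M$ leaves almost no slack, so the hard work lies in carefully exploiting the specific parameter choices $\tau \geq 6$, $m \geq 10$, and $\lambda_{j,2} = 1/2 + \pi/(2kR)$ to keep every error term small enough.
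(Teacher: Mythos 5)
Your proposal follows essentially the same route as the paper: the same splitting into the regimes $k\in\mathbb{K}_N\setminus\{k^*\}$ and $k=k^*$, the same use of Lemma \ref{Expansion} to extract the leading term $\frac{2}{\pi k\sqrt{r_{j,1}r_{j,2}}}\sin\left(k(r_{j,2}-r_{j,1})\right)$, the same algebraic identity $r_{j,2}^2-r_{j,1}^2=R^2\eta(1+\eta-2\cos\theta)$ with $\eta=\pi/(2kR)$ driving the phase-difference estimate, and small-argument series at $k^*$. Two remarks on accuracy of the sketch. First, in Case (i) the phase difference does not really stay ``close to $\pi/2$'': the paper shows $k(r_{j,1}-r_{j,2})\in[0.3363\pi,\pi/2]$, so $|\sin(\cdot)|\geq\sin(0.3363\pi)\approx 0.87$, and the leading constant $1$ in $M=1-\frac{7}{20\tau}$ is only reached because the geometric factor $\frac{2R}{\pi\sqrt{r_{j,1}r_{j,2}}}\geq \frac{2}{\pi\sqrt{5/4-\cos(\pi/10)}}\approx 1.16$ compensates; you do mention the upper bound on $\sqrt{r_{j,1}r_{j,2}}$, so this is consistent with your plan, but the product $0.87\times 1.16\approx 1.01$ is the actual source of the constant, not a sine close to $1$.

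The genuine concern is Case (ii). You reduce the determinant to $\frac{2}{\pi}J_0(t_1)J_0(t_2)\ln(r_{j,2}/r_{j,1})$ ``plus remainders controlled by $k^*R=\pi/5$.'' This is not sufficient: the $O(t^2)$ correction in $Y_0(t)=\frac{2}{\pi}\left(\ln\frac{t}{2}+C_0\right)J_0(t)+\frac{t^2}{2\pi}+O(t^4)$ contributes the cross term $\frac{t_2^2-t_1^2}{2\pi}\approx 0.363$ to the determinant, which is of the \emph{same size} as the logarithmic term ($\approx 0.355$) and enters with a definite positive sign because $t_2>t_1$. The paper keeps this term as part of the main contribution, obtaining $0.3552+0.3632-0.2255>4/9$. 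If you lump it into the ``remainders'' and bound it in absolute value, the estimate becomes $0.355-0.363-(\text{higher order})<0$ and the bound $M^*=4/9$ cannot be recovered. So in Case (ii) you must carry the $Y_0$ expansion explicitly through the $t^2$ term and exploit its sign; only the genuinely higher-order pieces ($\tilde\alpha=O(t^4)$ and $\tilde\beta=O(t^3)$) can be treated as absorbable errors.
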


\begin{proof} We divide the proof into two cases.

Case (i): $k\in\mathbb{K}_N\backslash\{k^*\}$. 

It is clear that
\begin{eqnarray*}
k\geq k_{\min}:=\min\limits_{k\in \mathbb{K}_N\backslash\{k^*\}}k=\frac{\pi}{a}.
\end{eqnarray*}
And from \eqref{assumption}, we see
\begin{equation*}
  \lambda_{j,2} \leq \frac{1}{2}+\frac{\pi}{2k_{\min}\tau a}=\frac{\tau+1}{2\tau}<\frac{2}{3}.
\end{equation*}
By using the definition $r_{j,\ell}=|x-z_{j,\ell}|$ for $\ell=1,2$ and $x=R(\cos\theta,\sin\theta)\in \Gamma_j$, we have
\begin{eqnarray}\label{rjl}
      r_{j,\ell}=R\sqrt{1+\lambda_{j,\ell}^2-2\lambda_{j,\ell}\cos(\theta-\vartheta_{2j-1})}
          \geq (1-\lambda_{j,\ell})R,
\end{eqnarray}
which, together with \eqref{assumption}, yields
\begin{align}
  k r_{j,1} \geq& kR(1-\lambda_{j,1})\geq k_{\min} R \left(1-\frac{1}{2}\right)
      = \frac{\pi}{2}\frac{R}{a}=\frac{\tau\pi}{2}, \label{krj1} \\
  k r_{j,2} \geq& kR(1-\lambda_{j,2})> k_{\min} R \left(1-\frac{2}{3}\right) \label{krj2}
    =\frac{\tau\pi}{3},
\end{align}
and
\begin{align*}
k(r_{j,1}-r_{j,2})=&k\frac{r_{j,1}^2-r_{j,2}^2}{r_{j,1}+r_{j,2}} \\
=& kR^2(\lambda_{j,2}-\lambda_{j,1})\frac{2\cos(\theta-\vartheta_{2j-1})-(\lambda_{j,1}+\lambda_{j,2})}{r_{j,1}+r_{j,2}} \\
=& \frac{\pi}{2}R \frac{2\cos(\theta-\vartheta_{2j-1})-(\lambda_{j,1}+\lambda_{j,2})}{r_{j,1}+r_{j,2}} \\
=& \frac{\pi}{2}\frac{2\cos(\theta-\vartheta_{2j-1})-\lambda_{j,2}-\frac{1}{2}}{\sqrt{\frac{5}{4}-\cos(\theta-\vartheta_{2j-1})}+\sqrt{1+\lambda_{j,2}^2-2\lambda_{j,2}\cos(\theta-\vartheta_{2j-1})}} \\
=:& \frac{\pi}{2} \varphi(\theta,\lambda_{j,2}).
\end{align*}

Further, from $\vartheta_{2j-2} \leq\theta\leq \vartheta_{2j}$, $m\geq10$ and a direct calculation, we have
\begin{equation*}
  \frac{\partial\varphi}{\partial\theta}
\begin{cases}
   \geq 0, & \mathrm{if} \ \vartheta_{2j-2} \leq\theta\leq \vartheta_{2j-1}, \\
   \leq 0, & \mathrm{if} \ \vartheta_{2j-1} \leq\theta\leq \vartheta_{2j}.
\end{cases}
\end{equation*}

This means $\varphi(\vartheta_{2j},\lambda_{j,2}) \leq \varphi(\theta,\lambda_{j,2}) \leq \varphi(\vartheta_{2j-1},\lambda_{j,2})=1,\forall \lambda_{j,2}\in (\frac{1}{2},\frac{2}{3}]$. In addition,
\begin{align*}
\varphi(\vartheta_{2j},\lambda_{j,2})\geq& \frac{2\cos(\frac{\pi}{10})-\lambda_{j,2}-\frac{1}{2}}{\sqrt{\frac{5}{4}-\cos(\frac{\pi}{10})}+\sqrt{1+\lambda_{j,2}^2-2\lambda_{j,2}\cos(\frac{\pi}{10})}}\\
\geq& \frac{2\cos(\frac{\pi}{10})-\frac{2}{3}-\frac{1}{2}}{\sqrt{\frac{5}{4}-\cos(\frac{\pi}{10})}+\sqrt{1+(\frac{1}{2})^2-2\cdot\frac{1}{2}\cos(\frac{\pi}{10})}}\\
\approx& 0.67255
\end{align*}
Hence,
\begin{equation}\label{kr1r2}
  0.3363\pi \leq  k (r_{j,1}-r_{j,2})\leq \frac{\pi}{2}.
\end{equation}

Denote
\begin{equation}\label{alpha}
  \alpha(t):=J_{0}(t)-\sqrt{\frac{2}{\pi t}}\cos\left(t-\frac{\pi}{4}\right),
 \quad \beta(t):=Y_{0}(t)-\sqrt{\frac{2}{\pi t}}\sin\left(t-\frac{\pi}{4}\right).
\end{equation}
Then by using \eqref{Estimate}, we obtain
\begin{equation}\label{detEstimate1}
  \sqrt{|\alpha(t)|^2+ |\beta(t)|^2} \leq \frac{t^{-3/2}}{4\sqrt{2\pi}},  \qquad t>0.
\end{equation}
From  \eqref{formula} and \eqref{alpha}, it is readily to see that
\begin{align*}
  \det(A_{j,k})=& J_0(kr_{j,1})Y_0(kr_{j,2}) - J_0(kr_{j,2}) Y_0(kr_{j,1}) \\
    =& \frac{2}{\pi k\sqrt{r_{j,1}r_{j,2}}}\sin\left(k(r_{j,2}-r_{j,1})\right)+\gamma(s_1,s_2),
\end{align*}
where $s_\ell=k r_{j,\ell}$ ($\ell=1,2$), and
\begin{align*}
\gamma(s_1,s_2) =& \sqrt{\frac{2}{\pi s_2}} \left( \alpha(s_1)\sin\left(s_2-\frac{\pi}{4}\right)-\beta(s_1)\cos\left(s_2-\frac{\pi}{4}\right) \right) \\
    &- \sqrt{\frac{2}{\pi s_1}} \left( \alpha(s_2)\sin\left(s_1-\frac{\pi}{4}\right)
                        -\beta(s_2)  \cos\left(s_1-\frac{\pi}{4}\right)\right) \\
    &+ \alpha(s_1)\beta(s_2)+\alpha(s_2)\beta(s_1).
\end{align*}
Further by using \eqref{detEstimate1}, \eqref{krj1} and \eqref{krj2}, we derive
\begin{align*}
  |\gamma(s_1,s_2)| \leq& \sqrt{\frac{2}{\pi s_2}} \left( |\alpha(s_1)|
                       +|\beta(s_1)|  \right) +  \left|\alpha(s_1)\beta(s_2)\right| \\
  &+ \sqrt{\frac{2}{\pi s_1}} \left( |\alpha(s_2)| + |\beta(s_2)| \right)
   +\left|\alpha(s_2)\beta(s_1)\right| \\
  \leq& \sqrt{\frac{2}{\pi s_2}}\frac{s_1^{-3/2}}{4\sqrt{\pi}}
         +\sqrt{\frac{2}{\pi s_1}}\frac{s_2^{-3/2}}{4\sqrt{\pi}}
   +2\frac{s_1^{-3/2}}{4\sqrt{2\pi}}\frac{s_2^{-3/2}}{4\sqrt{2\pi}} \\
   =& \frac{1}{2\sqrt{2}\pi k\sqrt{r_{j,1}r_{j,2}}}\left(\frac{1}{s_1}+\frac{1}{s_2}
     + \frac{1}{4\sqrt{2}s_1s_2}\right) \\
   \leq& \frac{1}{2\sqrt{2}\pi k\sqrt{r_{j,1}r_{j,2}}}\left(\frac{2}{\tau\pi}+\frac{3}{\tau\pi}+\frac{3}{2\sqrt{2}\tau^2\pi^2}\right) \\
\leq& \frac{1}{2\sqrt{2}\pi \tau k\sqrt{r_{j,1}r_{j,2}}}\left(\frac{5}{\pi}+\frac{1}{2\sqrt{2}\pi^2}\right) \\
\approx& \frac{0.5754}{\tau\pi k\sqrt{r_{j,1}r_{j,2}}}\\
   <& \frac{3}{5\tau\pi k\sqrt{r_{j,1}r_{j,2}}},
\end{align*}
which, together with \eqref{kr1r2}, $r_{j,1}>r_{j,2}$ and $\lambda_{j,1} = 1/2$, implies
\begin{align*}
    |\det(A_{j,k})| \geq& \frac{2}{\pi k\sqrt{r_{j,1}r_{j,2}}}|\sin\left(k(r_{j,2}-r_{j,1})\right)|-|\gamma(s_1, s_2)| \\
  \geq& \frac{2}{\pi k\sqrt{r_{j,1}r_{j,2}}}
   \left( \left|\sin\left(k(r_{j,2}-r_{j,1})\right)\right|-  \frac{3}{10\tau}\right) \\
  \geq& \frac{2}{\pi k r_{j,1}}\left( \sin(0.3363\pi)-  \frac{3}{10\tau} \right) \\
  \geq& \frac{2\left( \sin(0.3363\pi)-  \frac{3}{10\tau} \right)}{\pi kR\sqrt{1+\lambda_{j,1}^2-2\lambda_{j,1}\cos(\frac{\pi}{10})}} \\
\approx& \frac{1}{kR}\left(1.0137-\frac{6.9861}{20\tau}\right)\\
 \geq& \frac{M}{kR},
\end{align*}
where $M= 1- \frac{7}{20\tau}$. Hence, the estimate \eqref{detAj} is obtained.

Case (ii): $k=k^*=\pi/(30a)$. 

Denote
\begin{align}
 J_0(t)=&1-\frac{t^2}{4}+\tilde{\alpha}(t), \label{alpha1} \\
 Y_0(t)=&\frac{2}{\pi}\left(1-\frac{t^2}{4}\right)\left(\ln\frac{t}{2}+C_0\right)+ \frac{t^2}{2\pi}+\tilde{\beta}(t), \label{alpha2}
\end{align}
where $C_0=0.5772\cdots$ is Euler's constant. Since
\[
 J_0(t)=\sum_{p=0}^{\infty}\frac{(-1)^p}{(p!)^2}\left(\frac{t}{2}\right)^{2p}
\]
and
\[
 Y_0(t)=\frac{2}{\pi} J_0(t)\left(\ln\frac{t}{2}+C_0\right)-\frac{2}{\pi}\sum_{p=1}^{\infty}
     (-1)^p\frac{\left(1+\frac{1}{2}+\cdots+\frac{1}{p}\right)}{(p!)^2}\left(\frac{t}{2}\right)^{2p},
\]
we have
\begin{equation}\label{detEstimate2}
  0<\tilde{\alpha}(t) \leq \frac{t^4}{64},
    \quad |\tilde{\beta}(t)|\leq  \frac{ t^3}{72}+\frac{ t^4}{62}, \quad 0<t<2.
\end{equation}
From \eqref{formula}, \eqref{alpha1} and \eqref{alpha2}, we see
\begin{align*}
  \det(A_{j,k})=& J_0(t_1)Y_0(t_2) - J_0(t_2) Y_0(t_1) \\
    =& \frac{2}{\pi} \left(1-\frac{t_1^2}{4}\right)\left(1-\frac{t_2^2}{4}\right)\ln \frac{r_{j,2}}{r_{j,1}}+\frac{t_2^2-t_1^2}{2\pi}+\tilde{\gamma}(t_1, t_2),
\end{align*}
where $t_\ell=k^*r_{j,\ell}$ ($\ell=1,2$), and
\begin{align*}
  \gamma(t_1, t_2) =&  \tilde{\alpha}(t_1)
       \left( \frac{2}{\pi}\left(1-\frac{t_2^2}{4}\right)\left(\ln\frac{t_2}{2}+ C_0\right)
                              +\frac{t_2^2}{2\pi}+\tilde{\beta}(t_2) \right) \\
                      & -\tilde{\alpha}(t_2)
       \left( \frac{2}{\pi}\left(1-\frac{t_1^2}{4}\right)\left(\ln\frac{t_1}{2}+ C_0\right)
                               +\frac{t_1^2}{2\pi} +\tilde{\beta}(t_1) \right) \\
        &+\left(1-\frac{t_1^2}{4}\right)\tilde{\beta}(t_2)-\left(1-\frac{t_2^2}{4}\right)\tilde{\beta}(t_1).
\end{align*}
Further,  using \eqref{detEstimate2}, $k^*R=\pi/5, 0.5k^*R \leq t_1\leq 0.55k^*R$ and
$2.47k^*R \leq t_2\leq 2.5k^*R$, we derive
\begin{align*}
    \left|\gamma(t_1, t_2)\right| \leq&  \frac{t_1^4}{64}
             \left(\frac{2}{\pi}\left(1-\frac{t_2^2}{4}\right)\left|\ln\frac{t_2}{2}+C_0\right|
                        +\frac{t_2^2}{2\pi} +\frac{ t_2^3}{72}+\frac{ t_2^4}{62}\right) \\
  & + \frac{t_2^4}{64}
             \left(\frac{2}{\pi}\left(1-\frac{t_1^2}{4}\right)\left|\ln\frac{t_1}{2}+C_0\right|
                        +\frac{t_1^2}{2\pi} +\frac{ t_1^3}{72}+\frac{ t_1^4}{62}\right) \\
  & + \left(1-\frac{t_1^2}{4}\right)\left(\frac{ t_2^3}{72}+\frac{ t_2^4}{62}\right)
      + \left(1-\frac{t_2^2}{4}\right)\left(\frac{ t_1^3}{72}+\frac{ t_1^4}{62}\right) \\
   \leq& 1.43\times10^{-4}+0.0767+0.1483+ 3.25\times10^{-4} \\
   <& 0.2255,
\end{align*}
and thus,
\begin{align*}
  \left|\det(A_{j,k})\right| \geq&  \frac{2}{\pi} \left(1-\frac{t_1^2}{4}\right)\left(1-\frac{t_2^2}{4}\right)\ln \frac{r_{j,2}}{r_{j,1}}
    + \frac{t_2^2-t_1^2}{2\pi}-\left|\tilde{\gamma}(k^*,r_{j,1},r_{j,2})\right| \\
 >&  0.3552+0.3632-0.2255 \\
 >& \frac{4}{9}.
\end{align*}
This completes the proof.  
\end{proof}

\begin{remark}
We would like to point out that there exist other strategies for choosing the parameters in \eqref{assumption} such that the inequality \eqref{detAj} holds, and the unique solvability of equations \eqref{EqnF1} and \eqref{EqnF2} could be ensured as well.
\end{remark}

Now, we turn to analyzing the stability of the phase retrieval formula.
For a fixed $k$ and $j$, we consider the perturbed equation system for the unknowns ${\rm Re}(u_{j,k}^\epsilon)$ and ${\rm Im}(u_{j,k}^\epsilon)$:
\begin{align}
Y_0(kr_{j,1}){\rm Re}(u_{j,k}^\epsilon) - J_0(kr_{j,1}){\rm Im}(u_{j,k}^\epsilon)  = & f_{j,1,k}^{\epsilon}, \label{EqnP1} \\
Y_0(kr_{j,2}){\rm Re}(u_{j,k}^\epsilon)- J_0(kr_{j,2}){\rm Im}(u_{j,k}^\epsilon)  = & f_{j,2,k}^{\epsilon}, \label{EqnP2}
\end{align}
where
\begin{equation}\label{ratiodelta}
\begin{array}{ll}
  f_{j,\ell,k}^\epsilon
   =\dfrac{2}{c_{j,\ell,k}^\epsilon}\left(|v_{j,\ell,k}^\epsilon|^2-|u_{j,k}^\epsilon|^2\right)-\dfrac{c_{j,\ell,k}^\epsilon}{8}|H^{(1)}_{0}(kr_{j,\ell})|^2,
\\[5mm]\displaystyle
  c_{j,\ell,k}^\epsilon=\frac{\|u^\epsilon(\cdot, k)\|_{j,\infty}}{\|\Phi_k(\cdot,z_{j,\ell})\|_{j,\infty}},
\end{array}
\quad \ell=1,2.
\end{equation}
Here $|u_{j,k}^\epsilon|$ and $|v_{j,\ell,k}^\epsilon|$ ($\ell=1,2$) are measured noisy data satisfying
\begin{equation}\label{Error}
 \begin{array}{ll}
  \||u_{j,k}^\epsilon|-|u_{j,k}|\|_{j,\infty}\leq \epsilon \|u_{j,k}\|_{j,\infty},
  \\[1mm]
  \left\||v_{j,\ell,k}^\epsilon|-|\hat{v}_{j,\ell,k}|\right\|_{j,\infty}\leq \epsilon
     \left\|\hat{v}_{j,\ell,k}\right\|_{j,\infty},
 \end{array}
\end{equation}
where $0<\epsilon<1, \ell=1,2$ and $\hat{v}_{j,\ell,k}(\cdot)=u_{j,k}(\cdot)-c^\epsilon_{j,\ell,k}\Phi_k(\cdot,z_{j,\ell})$.
It is readily to see that the solutions to the perturbed equations \eqref{EqnP1} and \eqref{EqnP2} can also
be given by  formula \eqref{formula} with $f_{j,\ell,k}^\epsilon$ in place of $f_{j,\ell,k}$ ($\ell=1,2$).
Then we have the following stability result.
\begin{theorem}\label{Stablilty}
Under \eqref{assumption},   we have the following estimate
\begin{equation}\label{ErrEstimate}
 \|u_{j,k}^\epsilon-u_{j,k}\|_{j,\infty} \leq \epsilon C_\epsilon\|u_{j,k}\|_{j,\infty},
 \quad j=1,\cdots,m,
\end{equation}
where
\[
C_\epsilon=\frac{2.5(2+\epsilon)^2(3+\epsilon)+15}{1-\epsilon}.
\]
\end{theorem}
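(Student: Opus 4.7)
\medskip
\noindent\textbf{Proof plan.} The plan is to express the reconstruction error $u_{j,k}^\epsilon - u_{j,k}$ via Cramer's rule \eqref{formula}, bound each ingredient in turn, and combine with Theorem \ref{det}. First I would derive a compact error formula. Since the coefficient matrix $A_{j,k}$ depends only on $J_0(kr_{j,\ell})$ and $Y_0(kr_{j,\ell})$ and not on measured data, $\det(A_{j,k})$ is unchanged by the perturbation. Subtracting the two instances of \eqref{formula}, using linearity of the determinant in its first column, and combining real and imaginary parts with $H^{(1)}_0 = J_0 + \mathrm{i} Y_0$ produces
\[
u_{j,k}^\epsilon - u_{j,k} = \frac{(f_{j,2,k}^\epsilon - f_{j,2,k})\,H^{(1)}_0(kr_{j,1}) - (f_{j,1,k}^\epsilon - f_{j,1,k})\,H^{(1)}_0(kr_{j,2})}{\det(A_{j,k})}.
\]
This reduces the stability estimate to: (a) a lower bound on $|\det(A_{j,k})|$, already supplied by Theorem \ref{det}; (b) an upper bound on $|H^{(1)}_0(kr_{j,\ell})|$ from Lemma \ref{Expansion}; and (c) an upper bound on $|f_{j,\ell,k}^\epsilon - f_{j,\ell,k}|$.

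The critical ingredient is (c). The key observation is that, from the derivation of \eqref{EqnF1}--\eqref{EqnF2}, one sees $f_{j,\ell,k} = Y_0(kr_{j,\ell})\mathrm{Re}(u_{j,k}) - J_0(kr_{j,\ell})\mathrm{Im}(u_{j,k})$, which is \emph{independent} of the scaling factor $c_{j,\ell,k}$. Setting $\hat{v}_{j,\ell,k} := u_{j,k} - c_{j,\ell,k}^\epsilon\,\Phi_k(\cdot,z_{j,\ell})$ and letting $\hat{f}_{j,\ell,k}$ denote the analogue of $f_{j,\ell,k}$ built from $c_{j,\ell,k}^\epsilon$ and the exact moduli $|\hat{v}_{j,\ell,k}|$, $|u_{j,k}|$, this independence forces $\hat{f}_{j,\ell,k}=f_{j,\ell,k}$, so the perturbation comes solely from measurement noise:
\[
f_{j,\ell,k}^\epsilon - f_{j,\ell,k} = \frac{2}{c_{j,\ell,k}^\epsilon}\bigl[(|v_{j,\ell,k}^\epsilon|^2 - |\hat{v}_{j,\ell,k}|^2) - (|u_{j,k}^\epsilon|^2 - |u_{j,k}|^2)\bigr].
\]
Factoring each $|a|^2-|b|^2$ as $(|a|-|b|)(|a|+|b|)$, invoking \eqref{Error}, and using $\|\hat{v}_{j,\ell,k}\|_{j,\infty} \leq (2+\epsilon)\|u_{j,k}\|_{j,\infty}$ (which follows from $c_{j,\ell,k}^\epsilon\|\Phi_k(\cdot,z_{j,\ell})\|_{j,\infty} = \|u^\epsilon(\cdot,k)\|_{j,\infty}$ and the triangle inequality) together with $1/c_{j,\ell,k}^\epsilon \leq \|\Phi_k(\cdot,z_{j,\ell})\|_{j,\infty}/[(1-\epsilon)\|u_{j,k}\|_{j,\infty}]$ and $|\Phi_k|=|H^{(1)}_0(k\cdot)|/4$, yields a bound of the shape
\[
\|f_{j,\ell,k}^\epsilon - f_{j,\ell,k}\|_{j,\infty} \leq \frac{C_0(\epsilon)\,\|H^{(1)}_0(kr_{j,\ell})\|_{j,\infty}\,\|u_{j,k}\|_{j,\infty}}{1-\epsilon},
\]
where $C_0(\epsilon)$ is the polynomial in $\epsilon$ matching the factor $(2+\epsilon)^2(3+\epsilon)$ in the statement.

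Combining these with the lower bound from Theorem \ref{det} finishes the argument. For $k\in\mathbb{K}_N\setminus\{k^*\}$, the bound $|\det(A_{j,k})|\geq M/(kR)$ pairs with $|H^{(1)}_0(kr_{j,\ell})|^2 \lesssim 2/(\pi k r_{j,\ell})$ from Lemma \ref{Expansion}, so the dimensionless ratio $|H^{(1)}_0(kr_{j,1})\,H^{(1)}_0(kr_{j,2})|/|\det(A_{j,k})|$ is bounded uniformly in $k$; the remaining $kR$-factors cancel precisely against the $(kR)^{-1}$ in the denominator. For $k=k^*$, one instead combines $|\det(A_{j,k})|\geq 4/9$ with the series expansions \eqref{alpha1}--\eqref{alpha2} to control $|H^{(1)}_0(k^*r_{j,\ell})|$ directly. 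The main obstacle is precisely this split into two regimes: the two cases yield quite different chains of inequalities for $|H^{(1)}_0|$ and $|\det(A_{j,k})|$, and the numerical constants ($2.5$ and $15$ in $C_\epsilon$) must be tracked tightly enough through each chain so that a single $C_\epsilon$ works uniformly over all admissible frequencies.
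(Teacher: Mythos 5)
Your proposal is correct and its overall skeleton coincides with the paper's: bound the perturbation of the right-hand sides $f_{j,\ell,k}$, invert the $2\times 2$ system, and divide by the determinant lower bound of Theorem \ref{det}, splitting into the cases $k\in\mathbb{K}_N\setminus\{k^*\}$ and $k=k^*$ exactly as you describe. Where you genuinely depart from the paper is in step (c). The paper compares $f^\epsilon_{j,\ell,k}$ (built from $c^\epsilon_{j,\ell,k}$ and $|v^\epsilon_{j,\ell,k}|$) against $f_{j,\ell,k}$ built from the \emph{exact} scaling factor $c_{j,\ell,k}$ and the exact field $v_{j,\ell,k}=u-c_{j,\ell,k}\Phi_k$; it must therefore first prove $|c^\epsilon_{j,\ell,k}-c_{j,\ell,k}|\le\epsilon c_{j,\ell,k}$, then propagate this through three separate terms ($|v|^2/c$, $|u|^2/c$, and $c|H^{(1)}_0|^2/8$) by triangle inequalities, arriving at $\eta_\epsilon=\bigl(2(2+\epsilon)^2(3+\epsilon)+12\bigr)/(1-\epsilon)$. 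You instead observe that the identity $\tfrac{2}{c}\bigl(|u-c\Phi_k|^2-|u|^2\bigr)-\tfrac{c}{8}|H^{(1)}_0|^2=Y_0\,{\rm Re}(u)-J_0\,{\rm Im}(u)$ holds for \emph{every} $c>0$, so the intermediate quantity $\hat f_{j,\ell,k}$ built from $c^\epsilon$ and the exact moduli of $\hat v_{j,\ell,k}=u-c^\epsilon\Phi_k$ equals $f_{j,\ell,k}$ identically; the perturbation of $c$ then never enters, and $f^\epsilon-f$ reduces to pure measurement noise controlled directly by \eqref{Error}. This is cleaner and yields a strictly smaller constant, $2\bigl((2+\epsilon)^3+(2+\epsilon)\bigr)/(1-\epsilon)\le\eta_\epsilon$, so the stated $C_\epsilon$ is certainly recovered. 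Your second simplification, recombining the two Cramer formulas into the single complex identity $u^\epsilon_{j,k}-u_{j,k}=\bigl(H^{(1)}_0(kr_{j,1})(f^\epsilon_{j,2,k}-f_{j,2,k})-H^{(1)}_0(kr_{j,2})(f^\epsilon_{j,1,k}-f_{j,1,k})\bigr)/\det(A_{j,k})$, is also valid and replaces the paper's separate real/imaginary estimates \eqref{Thm2est4}--\eqref{Thm2est5} and \eqref{Thm2est8}--\eqref{Thm2est9}; the concluding cancellation of the $kR$ factors against $M/(kR)$ in case (i) is exactly what the paper does. In short: same architecture, but your treatment of the $f$-perturbation is a genuine and worthwhile shortcut.
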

\begin{proof}
From \eqref{ratio}, \eqref{ratiodelta}, \eqref{Error} and $v_{j,\ell}(\cdot,k)=u(\cdot,k)-c_{j,\ell,k}\Phi_k(\cdot,z_{j,\ell})$ , we see
\begin{equation}\label{Errratio}
  |c_{j,\ell,k}^\epsilon-c_{j,\ell,k}|\leq \epsilon c_{j,\ell,k},
\end{equation}
and
\begin{align}
  \left\||v_{j,\ell,k}^\epsilon|-|v_{j,\ell,k}|\right\|_{j,\infty}\leq& \left\||v_{j,\ell,k}^\epsilon|-|\hat{v}_{j,\ell,k}|\right\|_{j,\infty}
  +\left\||\hat{v}_{j,\ell,k}^\epsilon|-|v_{j,\ell,k}|\right\|_{j,\infty} \notag \\
  \leq&  \epsilon \left\|\hat{v}_{j,\ell,k}\right\|_{j,\infty}+\epsilon c_{j,\ell,k}\|\Phi_k(\cdot,z_{j,\ell})\|_{j,\infty} \nonumber \\
  \leq&  \epsilon \left(1+ \frac{c_{j,\ell,k}^\epsilon}{c_{j,\ell,k}}\right)\|u_{j,k}\|_{j,\infty}+ \epsilon \|u_{j,k}\|_{j,\infty} \nonumber \\
  \leq&  \epsilon (3+\epsilon)\|u_{j,k}\|_{j,\infty}. \label{Errvjl}
\end{align}
Further, by using \eqref{Error}, \eqref{Errratio}, \eqref{Errvjl} and  $v_{j,\ell}(\cdot,k)=u(\cdot,k)-c_{j,\ell,k}\Phi_k(\cdot,z_{j,\ell})$,
we deduce
\begin{align*}
  \left|\frac{|v_{j,\ell,k}^\epsilon|^2}{c_{j,\ell,k}^\epsilon}-\frac{|v_{j,\ell,k}|^2}{c_{j,\ell,k}}\right|
  \leq& \frac{1}{c_{j,\ell,k}^\epsilon}\left||v_{j,\ell,k}^\epsilon|^2-|v_{j,\ell,k}|^2\right|
   +\left|\frac{1}{c_{j,\ell,k}^\epsilon}-\frac{1}{c_{j,\ell,k}}\right||v_{j,\ell,k}|^2 \\
  \leq & \frac{\epsilon(3+\epsilon)}{c_{j,\ell,k}^\epsilon} \|u_{j,k}\|_{j,\infty}
 \left(|v_{j,\ell,k}^\epsilon|+|v_{j,\ell,k}|\right) \\
 & +\frac{\left|c_{j,\ell,k}^\epsilon-c_{j,\ell,k}\right|}{c_{j,\ell,k}^\epsilon c_{j,\ell,k}}|v_{j,\ell,k}|^2 \\
 \leq &\frac{\epsilon(3+\epsilon)(2+\epsilon)^2}{c_{j,\ell,k}^\epsilon} \|u_{j,k}\|^2_{j,\infty}
   +\frac{\epsilon}{c_{j,\ell,k}^\epsilon}\|v_{j,\ell,k}\|_{j,\infty}^2 \\
 \leq & \frac{\epsilon ((3+\epsilon)(2+\epsilon)^2+2)}{c_{j,\ell,k} (1-\epsilon)}  \|u_{j,k}\|_{j,\infty}^2 \\
 =& \frac{\epsilon((3+\epsilon)(2+\epsilon)^2+2)}{1-\epsilon} \|u_{j,k}\|_{j,\infty}   \|\Phi_k(\cdot,z_{j,\ell})\|_{j,\infty}.
\end{align*}
Similarly, we have
\begin{align*}
  \left|\frac{|u_{j,k}^\epsilon|^2}{c_{j,\ell,k}^\epsilon}-\frac{|u_{j,k}|^2}{c_{j,\ell,k}}\right|
  \leq & \frac{\epsilon(3+\epsilon)}{1-\epsilon} \|u_{j,k}\|_{j,\infty}   \|\Phi_k(\cdot,z_{j,\ell})\|_{j,\infty}, \\
  \frac{\left|c_{j,\ell,k}^\epsilon-c_{j,\ell,k}\right|}{8}\left|H^{(1)}_{0}(kr_{j,\ell})\right|^2
  \leq & 2\epsilon  \|u_{j,k}\|_{j,\infty}   \|\Phi_k(\cdot,z_{j,\ell})\|_{j,\infty}.
\end{align*}
By using the triangle inequality, we obtain
\begin{equation}\label{Thm2est1}
 \left|f_{j,\ell,k}^\epsilon-f_{j,\ell,k}\right|
     \leq  \epsilon \eta_\epsilon   \|u_{j,k}\|_{j,\infty}   \|\Phi_k(\cdot,z_{j,\ell})\|_{j,\infty}, \quad \ell=1,2,
\end{equation}
where
\[
\eta_\epsilon =\frac{2(2+\epsilon)^2(3+\epsilon)+12}{1-\epsilon}.
\]

The subsequent proof is divided into two cases.

Case (i): $k\in\mathbb{K}_N\backslash\{k^*\}$.  

From \eqref{Estimate}, \eqref{rjl}, \eqref{krj1} and \eqref{krj2}, it is readily to see that
\begin{align} 
 \left(\left|J_0(kr_{j,\ell}) \right|^2 + \left|Y_0(kr_{j,\ell}) \right|^2\right)^{1/2} \leq& \sqrt{\frac{2}{\pi kr_{j,\ell}}}+\frac{(kr_{j,\ell})^{-3/2}}{4\sqrt{2\pi}} \nonumber \\ 
   \leq& \sqrt{\frac{2}{\pi kr_{j,\ell}}}\left(1+\frac{1}{16\pi}\right) \nonumber \\ 
   \leq& \frac{0.82}{\sqrt{kR(1-\lambda_{j,\ell})}}, \label{Thm2est2}
\end{align}
which, together with \eqref{Thm2est1} and \eqref{Thm2est2}, implies
\begin{align}
 &\left|J_0(kr_{j,1}) ( f_{j,2,k}^\epsilon-f_{j,2,k} )- J_0(kr_{j,2})   ( f_{j,1,k}^\epsilon-f_{j,1,k} )\right| \nonumber \\
  \leq &0.82\epsilon \eta_\epsilon\|u_{j,k}\|_{j,\infty}\left(
  \frac{\|\Phi_k(\cdot,z_{j,2})\|_{j,\infty}}{\sqrt{kR(1-\lambda_{j,1})}}
 +\frac{\|\Phi_k(\cdot,z_{j,1})\|_{j,\infty}}{\sqrt{kR(1-\lambda_{j,2})}}\right) \nonumber \\
  \leq &0.82^2\epsilon \eta_\epsilon\|u_{j,k}\|_{j,\infty} 
  \frac{1}{2\sqrt{kR(1-\lambda_{j,1})}\sqrt{kR(1-\lambda_{j,2})}} \nonumber \\
  \leq &\frac{0.83\epsilon \eta_\epsilon}{kR}\|u_{j,k}\|_{j,\infty} \label{Thm2est4}
\end{align}
and
\begin{equation} \label{Thm2est5} 
 \left|Y_0(kr_{j,1}) ( f_{j,2,k}^\epsilon-f_{j,2,k})-Y_0(kr_{j,2})(f_{j,1,k}^\epsilon-f_{j,1,k})\right| \leq \frac{0.83\epsilon  \eta_\epsilon\|u_{j,k}\|_{j,\infty}}{kR}.
\end{equation}

Subtracting \eqref{EqnF1} from \eqref{EqnP1} (resp. \eqref{EqnF2} from \eqref{EqnP2}) yields
\begin{align}
 Y_0(kr_{j,1}){\rm Re}(u_{j,k}^\epsilon-u_{j,k}) - J_0(kr_{j,1}){\rm Im}(u_{j,k}^\epsilon-u_{j,k}) =  & f_{j,1,k}^\epsilon-f_{j,1,k}, \label{EqnErr1} \\
 Y_0(kr_{j,2}){\rm Re}(u_{j,k}^\epsilon-u_{j,k}) - J_0(kr_{j,2}){\rm Im}(u_{j,k}^\epsilon-u_{j,k}) = & f_{j,2,k}^\epsilon-f_{j,2,k}. \label{EqnErr2}
\end{align}
Thus,
\begin{eqnarray*}
{\rm Re}(u_{j,k}^\epsilon-u_{j,k}) & = & \frac{|J_0(kr_{j,1}) (f_{j,2,k}^\epsilon-f_{j,2,k})- J_0(kr_{j,2})(f_{j,1,k}^\epsilon-f_{j,1,k})|}{\det |A_{j,k}|},\\
{\rm Im}(u_{j,k}^\epsilon-u_{j,k}) & = & \frac{|Y_0(kr_{j,1}) ( f_{j,2,k}^\epsilon-f_{j,2,k})-Y_0(kr_{j,2})(f_{j,1,k}^\epsilon-f_{j,1,k})|}{\det |A_{j,k}|}.
\end{eqnarray*}

Then, in terms of \eqref{detAj}, \eqref{Thm2est4} and \eqref{Thm2est5}, we derive
\[
  \|u_{j,k}^\epsilon-u_{j,k}\|_{j, \infty}
   \leq \epsilon C_\epsilon\|u_{j,k}\|_{j,\infty},
\]
where
\[
C_\epsilon=\frac{2.5(2+\epsilon)^2(3+\epsilon)+15}{1-\epsilon}.
\]

Case (ii): $k=k^*$. 

From \eqref{alpha1}, \eqref{alpha2}, \eqref{detEstimate2},   $0.314 \leq t_1\leq 0.346$ and $1.551 \leq t_2\leq 1.571$, we have
\begin{equation} \label{Thm2est6}
   \left|J_0(t_1) \right|  \leq 1- \frac{t_1^2}{4}+ \frac{t_1^4}{64}< 0.976, \quad \left|J_0(t_2) \right|  \leq 1-\frac{t_2^2}{4}+\frac{t_2^4}{64}< 0.49,
\end{equation}
and
\begin{align}
   \left|Y_0(t_\ell) \right|  \leq& \left|\frac{2}{\pi} \left[1-\frac{t_\ell^2}{4}+\frac{t_\ell^4}{64}\right]\left[ \ln \frac{t_\ell}{2} + C_0\right]
   + \frac{t_\ell^2}{2\pi} -\frac{ 3t_\ell^4}{64\pi}\right|
                     +\frac{ t_\ell^6\ln t_\ell}{3619}+\frac{ t_\ell^6}{1809} \nonumber \\ 
  <& 
\begin{cases}
   0.777,  & \ell=1, \\
   0.413,  & \ell=2.
\end{cases} \label{Thm2est7}
\end{align}
 
Further, from \eqref{Thm2est1}, \eqref{Thm2est6} and \eqref{Thm2est7}, we obtain
\begin{align}
 &\left|J_0(t_1) (f_{j,2,k}^\epsilon-f_{j,2,k}) - J_0(t_2) ( f_{j,1,k}^\epsilon-f_{j,1,k})\right| \\[2mm]\nonumber
  \leq & 0.976\epsilon \eta_\epsilon \|u_{j,k}\|_{j,\infty}   \|\Phi_k(\cdot,z_{j,2})\|_{j,\infty}
   +0.49\epsilon \eta_\epsilon  \|u_{j,k}\|_{j,\infty}   \|\Phi_k(\cdot,z_{j,1})\|_{j,\infty}\nonumber \\
  \leq & 0.34 \epsilon \eta_\epsilon  \|u_{j,k}\|_{j,\infty} \label{Thm2est8}
\end{align}
and
\begin{align}
 &\left|Y_0(t_1) ( f_{j,2,k}^\epsilon-f_{j,2,k} )- Y_0(t_2)  (f_{j,1,k}^\epsilon-f_{j,1,k})\right| \nonumber\\
  \leq & 0.777\epsilon C_\epsilon \|u_{j,k}\|_{j,\infty}   \|\Phi_k(\cdot,z_{j,2})\|_{j,\infty}
   +0.413\epsilon C_\epsilon \|u_{j,k}\|_{j,\infty}   \|\Phi_k(\cdot,z_{j,1})\|_{j,\infty} \nonumber\\
  \leq & 0.28\epsilon \eta_\epsilon  \|u_{j,k}\|_{j,\infty}. \label{Thm2est9}
\end{align}
Then, by solving the equations \eqref{EqnErr1} and \eqref{EqnErr2} with \eqref{detAj}, \eqref{Thm2est8} and \eqref{Thm2est9}, we have completed the proof.
\end{proof}

\section{Fourier method}\label{sec:fourier_method}

Once the phase information of the measured data is retrieved, we could turn to the standard inverse source problem:

\begin{problem}[Multifrequency inverse source problem]\label{prob:phased_ISP}
Given the admissible set of wavenumbers $\mathbb{K}_N$, find an approximation of $S$ from the multifrequency data $\{u(x, k): x\in\Gamma_R ,\ k\in\mathbb{K}_N\}$.
\end{problem}

In this section, we will briefly outline the Fourier method for solving Problem \ref{prob:phased_ISP}. For more details on the Fourier method, we refer to \cite{ZG15} for the near-field case and \cite{WGZL17} for the far-field case. The basic idea of the Fourier method is to approximate the source $S$ by a Fourier expansion of the form
\begin{equation}\label{eq:fourier_expansion}
S_N(x):=\sum\limits_{|\bm{\ell}|_{\infty}\leq N}\hat{s}_{\bm\ell}\phi_{\bm\ell}(x)
\end{equation}
where ${\bm\ell}\in\mathbb{Z}^2$, $\hat{s}_{\bm\ell}$ are the Fourier coefficients and
\[
\phi_{\bm\ell}(x):=\exp\left(\mathrm{i} \frac{\pi}{a}{\bm\ell}\cdot x\right),\quad {\bm\ell}\in\mathbb{Z}^2,
\]
are the Fourier basis functions.

Let $\nu_\rho$ be the unit outward normal to $\Gamma_{\rho}:=\{x \in \mathbb{R}^2: |x|=\rho>R\}$ and define
\begin{align*}
w(x, k):= & \sum_{n\in\mathbb{Z}}\frac{H^{(1)}_n(k\rho)}{H^{(1)}_n(kR)}\hat{u}_{k,n}\mathrm{e}^{\mathrm{i} n \theta}, \quad x \in \Gamma_\rho, \\
\partial_{\nu_\rho}w(x, k):= &\sum_{n\in\mathbb{Z}}k\frac{H^{(1)\prime}_n(k\rho)}{H^{(1)}_n(kR)}\hat{u}_{k,n}\mathrm{e}^{\mathrm{i}  n \theta}, \quad x \in \Gamma_\rho.
\end{align*}
where
\[
\hat{u}_{k,n}:=\frac{1}{2\pi}\int_0^{2\pi}u(R,\theta, k)\mathrm{e}^{-\mathrm{i} n \theta}\, \mathrm{d} \theta.
\]
and $u(R,\theta, k)$ stands for $u(x, k)|_{\Gamma_R}$ in polar coordinates $x=R(\cos\theta,\sin\theta)$.

Following \cite{ZG15}, the Fourier coefficients are given by
\begin{align*}
 \hat{s}_{\bm\ell}= & \displaystyle\frac{1}{4a^2}\int_{\Gamma_\rho}\left(\partial_{\nu_\rho}
w(x, k)+ \mathrm{i} \frac{\pi}{a}(\bm{\ell}\cdot\nu_\rho)w(x, k)\right)\overline{\phi_{\bm\ell}(x)}\, \mathrm{d} s_x,\quad 1\leq |\bm{\ell}|_{\infty}\leq N, \\
 \hat{s}_{\bm 0}= &\displaystyle\frac{\lambda\pi}{4a^2\sin \lambda\pi}\Bigg(\int_{\Gamma_\rho}\left(\partial_{\nu_\rho}w(x, k^*)+ \mathrm{i} \frac{\pi}{a}({\bm\ell}^*\cdot\nu_\rho)w(x, k^*)\right)
 \overline{\phi_{{\bm\ell}^*}(x)}\, \mathrm{d} s_x\\
 &-\sum\limits_{1\leq|\bm{\ell}|_\infty\leq N} \hat{s}_{\bm\ell}\int_{V_0}\phi_{\bm\ell}(x)
 \overline{\phi_{\bm{\ell}^*}(x)}\, \mathrm{d} x\Bigg).
\end{align*}
where $\lambda$ is a constant satisfying $0<\lambda<a/(2\pi)$, $\bm{\ell}^*=(\lambda,0)$ and $k^*=\pi\lambda/a$.

The Fourier method is summarized in {\bf Algorithm FM}.
\begin{table}[h]
\centering
\begin{tabular}{cp{.8\textwidth}}
\toprule
\multicolumn{2}{l}{{\bf Algorithm FM:}\quad Fourier method for recovering the source} \\
\midrule
 {\bf Step 1} & Choose the parameters $\lambda, \rho, N$ and the admissible set $\mathbb{K}_N$;  \\
{\bf Step 2} &  Collect the noisy multifrequency data $\{u^\epsilon(x, k): x\in\Gamma_R,\ k\in\mathbb{K}_N\}$; \\
{\bf Step 3} & Evaluate the auxiliary Cauchy data $w$ and $\partial_{\nu_\rho}w$ on $\Gamma_\rho$;\\
{\bf Step 4} & Compute the Fourier coefficients $\hat{s}_{\bm 0}$ and $\hat{s}_{\bm\ell}(1\leq |\bm{\ell}|_{\infty}\leq N)$ and then $S_N$ defined in \eqref{eq:fourier_expansion} is the reconstruction of $S$. \\
\bottomrule
\end{tabular}
\end{table}

For the theoretical justifications of the Fourier method as well as validating numerical examples, we refer to \cite{ZG15}.

\section{Numerical examples}\label{sec:numerics}

In this section, we present below several numerical results to illustrate the feasibility and effectiveness of the proposed method. 

Synthetic radiated fields were generated by solving the forward problem via the finite element method. The computation was implemented on a truncated circular domain enclosed by an absorbing perfectly matched layer. The mesh of the forward solver is successively refined till the relative error of the successive measured data is below $0.1\%$. In addition to the discretization error of the forward solver, we also add some uniformly distributed random noises to the measurements in order to test the stability of the algorithm. The noisy phaseless data was given by the following formula:
\[
u^\epsilon:=(1+\epsilon r)|u|
\]
where $r$ is a uniformly distributed random number ranging from $-1$ to $1$, and $\epsilon>0$ is the noise level.

\subsection{Validation of phase retrieval}\label{retrieval_result}

In this subsection, we will present a numerical validation of the phase retrieval technique. In our experiment, the test source function is chosen as the following mountain shaped source
\begin{align*}
S(x_1, x_2)=&1.1\exp(-200((x_1-0.01)^2+(x_2-0.12)^2)) \\
&-100(x_2^2-x_1^2)\exp(-90(x_1^2+x_2^2)).
\end{align*}

Figure \ref{fig: source_1} illustrates the surface and contour plots of the exact source function $S$ in the domain $V_0=[-0.3, 0.3]^2$.

\begin{figure}
   \centering
   \subfloat[]{\includegraphics[width=.45\textwidth]{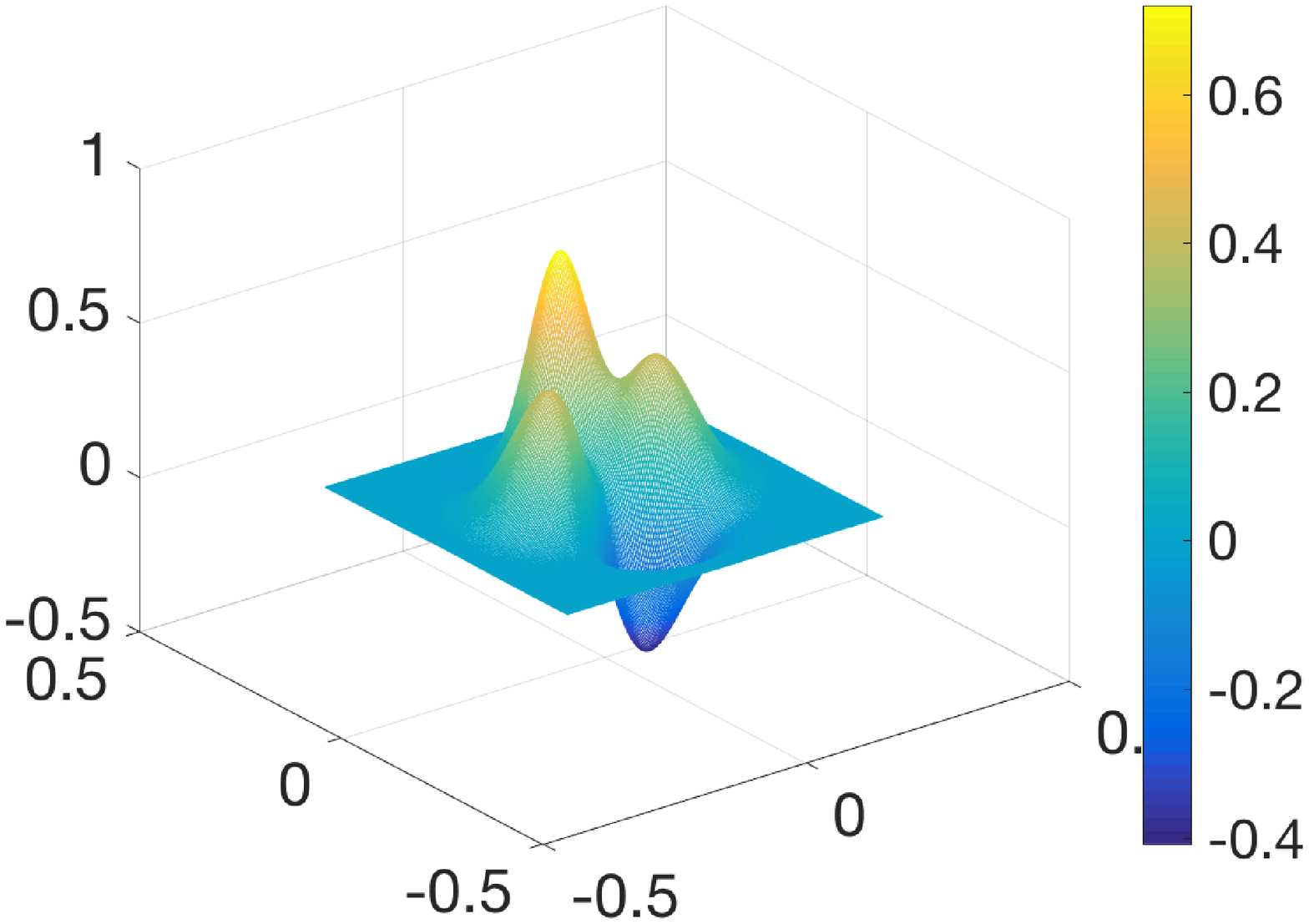}}
   \subfloat[]{\includegraphics[width=.45\textwidth]{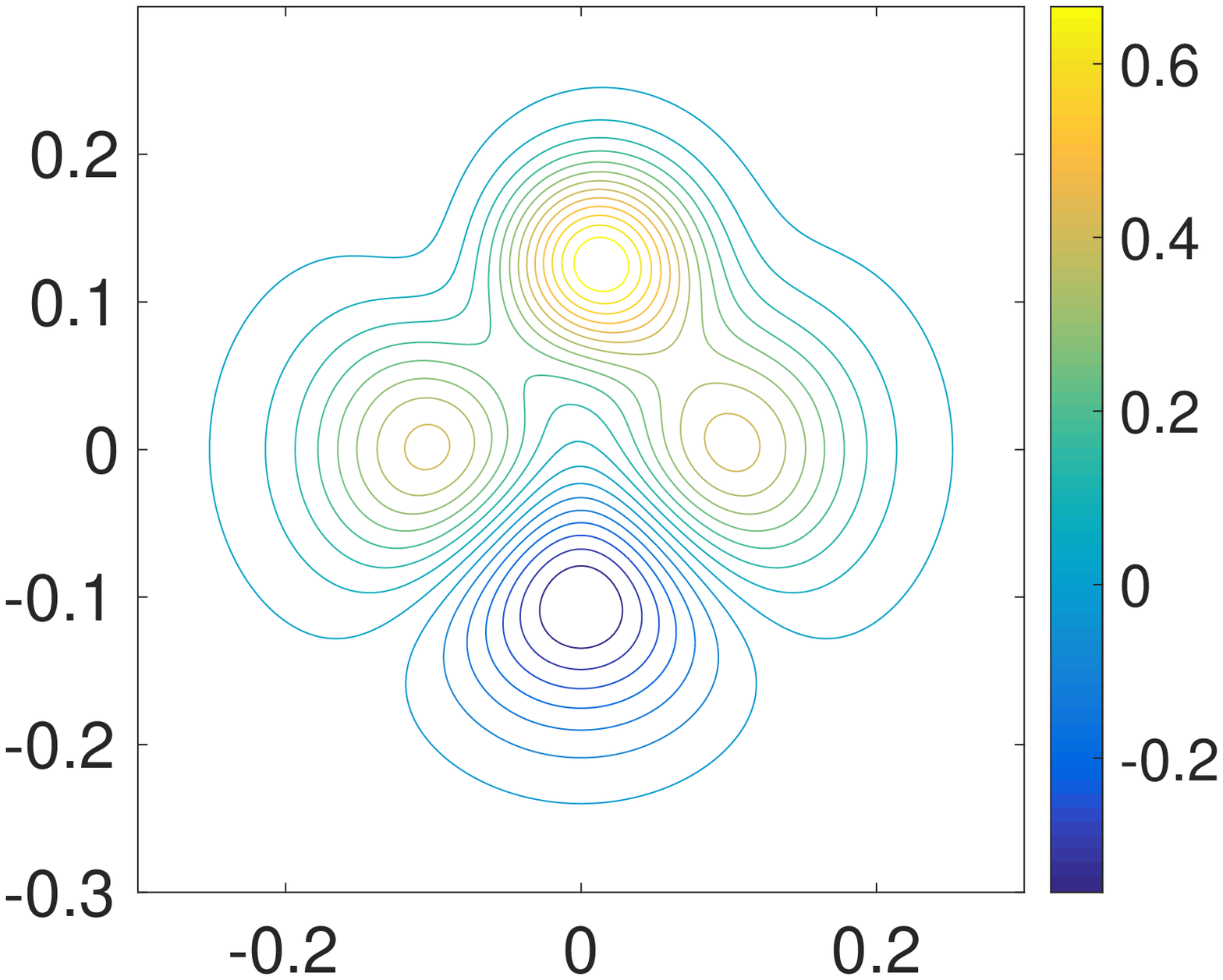}}
   \caption{The exact source function $S$.  (a) surface plot (b) contour plot}
   \label{fig: source_1}
\end{figure}

First we specify the parameters used in \eqref{assumption}: $a=0.3, \tau= 6, R=\tau a=1.8, m=10, \lambda_{1,1}=0.5, \vartheta_1=0.1\pi$ and thus $k^*=\pi/9$. The other wavenumbers are taken as $k=10\pi/3, 50\pi/3$ and $100\pi/3$. Without loss of generality, here we only show the phase retrieval results on $\Gamma_1$. Figure \ref{fig: retrieval_setup} shows the geometry setup of this example for two typical wavenumbers.

\begin{figure}
   \centering
   \subfloat[]{\includegraphics[width=.45\textwidth]{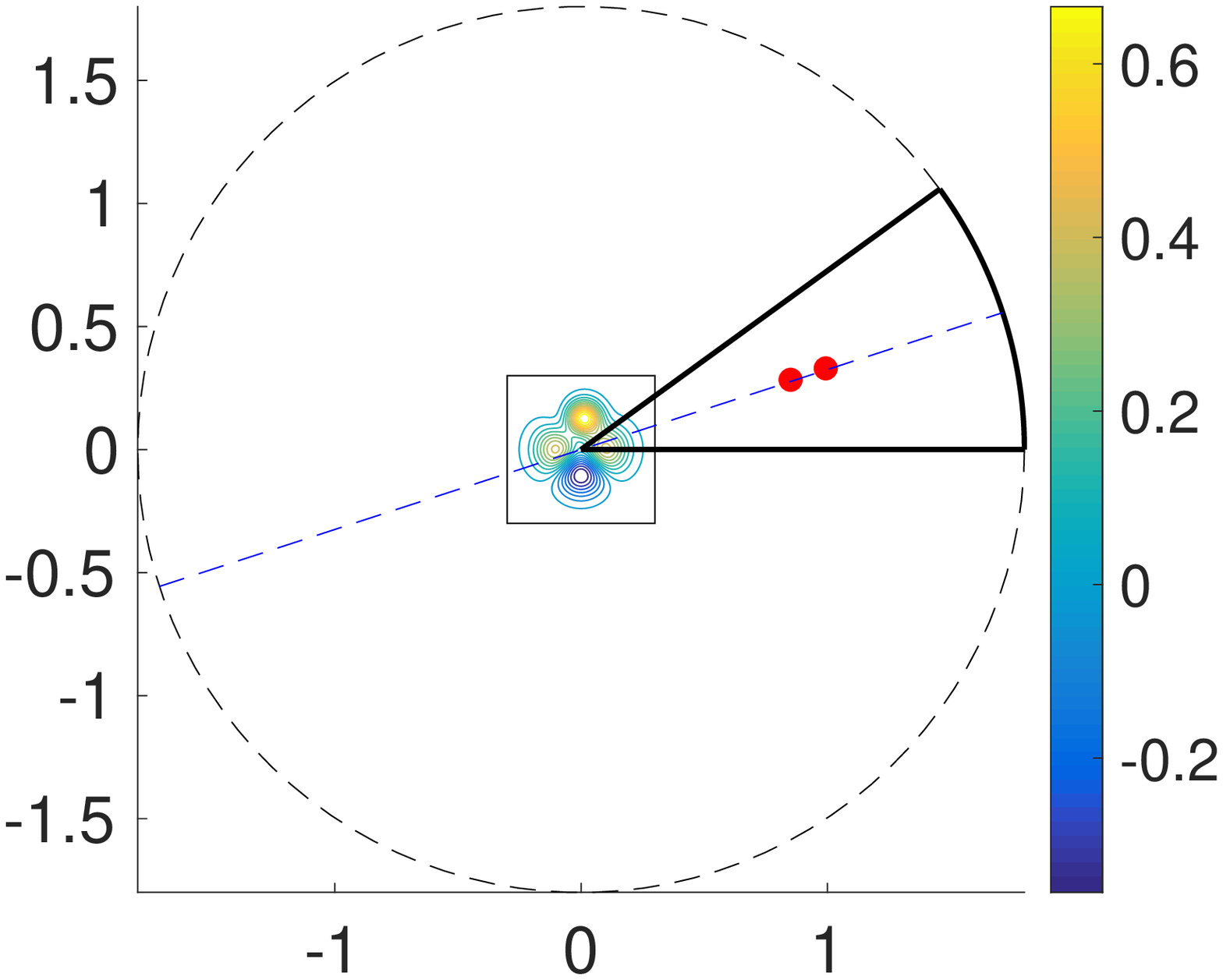}}
   \subfloat[]{\includegraphics[width=.45\textwidth]{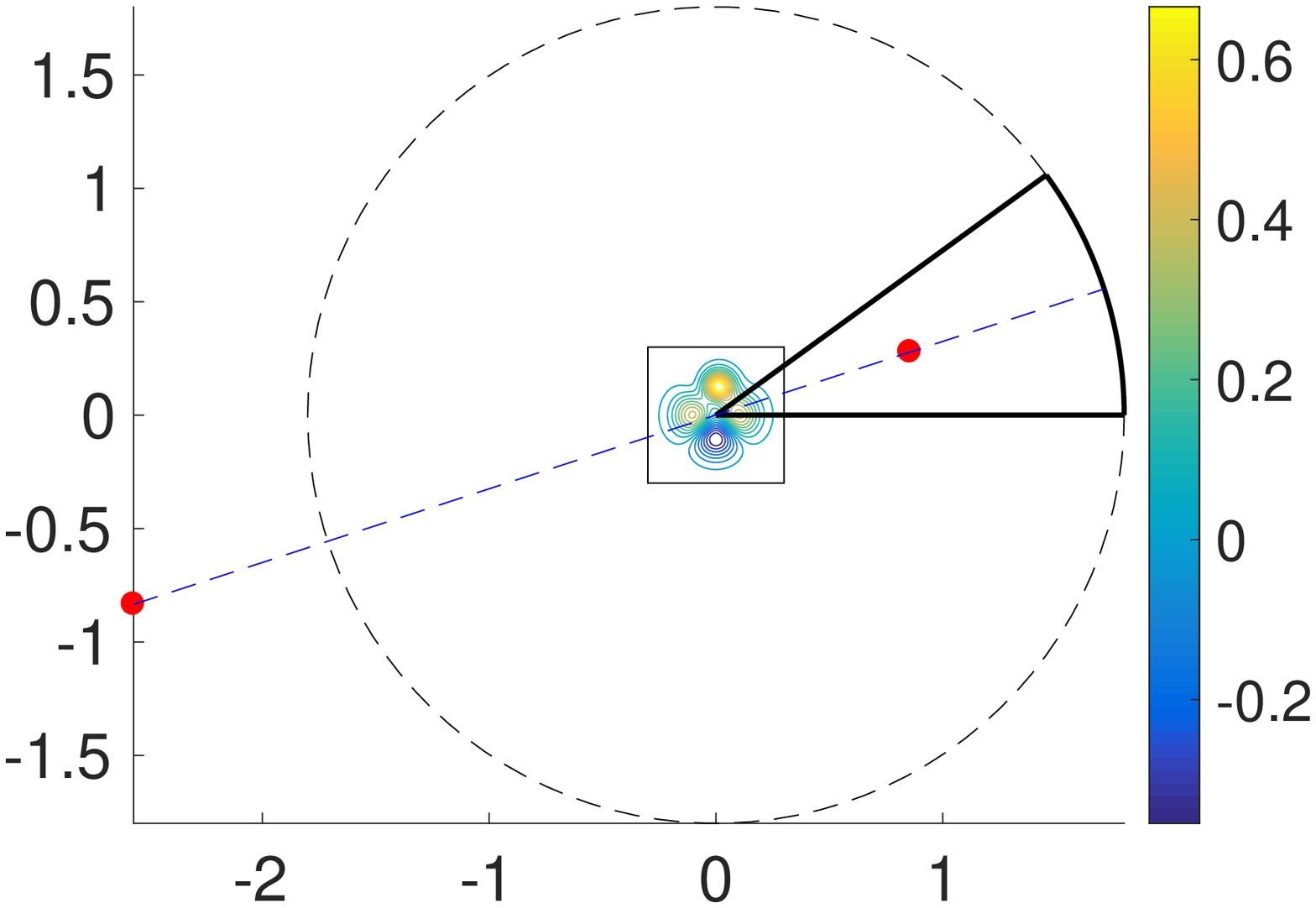}}
   \caption{Geometry illustration of the phase retrieval on $\Gamma_1$. The boundary of sector $B_1$ is depicted by the thick black line. The reference source points are denoted by the small red points. (a) $k=10\pi/3$ (b) $k=k^*=\pi/9$.}
   \label{fig: retrieval_setup}
\end{figure}

To quantitatively evaluate the accuracy of the phase retrieval, we list the relative errors in Table \ref{tab:error_L2} and \ref{tab:error_infinity}. In Table \ref{tab:error_L2} and Table \ref{tab:error_infinity}, the discrete relative errors are respectively calculated as follows
\begin{eqnarray*}
\frac{\left(\sum\limits_{n=1}^{N_j} |u(x_n, k)-u^\epsilon(x_n, k)|^2\right)^{1/2}}{\left(\sum\limits_{n=1}^{N_j} |u(x_n, k)|^2\right)^{1/2}},\quad x_n\in \Gamma_j,\ j=1,\cdots,m, \\
\frac{\max\limits_{x_n\in \Gamma_j} |u(x_n, k)-u^\epsilon(x_n, k)|}{\max\limits_{x_n\in \Gamma_j} |u(x_n, k)|},\quad n=1,\cdots,N_j,\ j=1,\cdots,m. 
\end{eqnarray*}
where $N_j$ denotes the number of measurements on $\Gamma_j$,  and $u$ and $u_\epsilon$ are the exact and retrieved radiated fields, respectively. The results in  Table \ref{tab:error_L2} and Table \ref{tab:error_infinity} could be served to verify our theoretical findings that the error decreases as the noise level decreases. Although the accuracy deteriorates correspondingly as the noise level increases, it can be seen from these results that the phase retrieval procedure is quite accurate and stable as long as the perturbation is moderately small. 

\begin{table}
\centering
\begin{tabular}{ccccc}
        \bottomrule
         & $k^*=\dfrac{\pi}{9}$ &  $k=\dfrac{10\pi}{3}$ & $k=\displaystyle\frac{50\pi}{3}$ &  $k=\dfrac{100\pi}{3}$\\
        \midrule
    $\epsilon=0$ & $5.2\times 10^{-16}$ & $1.9\times 10^{-16}$ & $3.6\times 10^{-16}$ &  $5.0\times 10^{-16}$ \\
    $\epsilon=0.1\%$  & 0.28\% & 0.13\% & 0.22\% &  0.27\% \\
    $\epsilon=1\%  $  & 2.95\% & 1.22\% & 2.16\% &  2.08\% \\
    $\epsilon=5\%  $  & 15.5\% & 5.12\% & 10.6\% &  10.23\% \\
        \bottomrule
\end{tabular}
\caption{\label{tab:error_L2} The relative $L^2$ errors between $u(\cdot, k)|_{\Gamma_1}$ and $u^\epsilon(\cdot, k)|_{\Gamma_1}$ for different wavenumbers and noise levels.}
\end{table}

\begin{table}
\centering
\begin{tabular}{ccccc}
        \bottomrule
         & $k^*=\dfrac{\pi}{9}$ &  $k=\dfrac{10\pi}{3}$ & $k=\dfrac{50\pi}{3}$ &  $k=\dfrac{100\pi}{3}$\\
        \midrule
    $\epsilon=0$ & $1.1\times 10^{-15}$ & $3.1\times 10^{-16}$ & $5.5\times 10^{-16}$ &  $3.8\times 10^{-16}$ \\
    $\epsilon=0.1\%$  & 0.45\% & 0.16\% & 0.44\% &  0.24\% \\
    $\epsilon=1\%  $  & 4.60\% & 1.42\% & 3.32\% &  3.18\% \\
    $\epsilon=5\%  $  & 23.9\% & 6.47\% & 19.2\% &  18.97\% \\
        \bottomrule
    \end{tabular}
\caption{\label{tab:error_infinity} The relative infinity errors between $u(\cdot, k)|_{\Gamma_1}$ and $u^\epsilon(\cdot, k)|_{\Gamma_1}$ for different wavenumbers and noise levels.}
\end{table}

\subsection{Reconstruct the source}

In this subsection, we are going to show some numerical results of the Fourier method for imaging the source function with phased data. The phased radiated field is obtained by the phase retrieval technique in the previous subsection. Following \cite{ZG15}, the truncation of the Fourier expansion is chosen as $N=2\lceil\epsilon^{-1/3}\rceil$ where the ceiling function $\lceil X\rceil$ denotes the largest integer that is smaller than $X+1$. We use 400 pseudo-uniformly distributed measurements on $\Gamma_R$ and $\Gamma_\rho$ with $\rho=1.4$. The relative $L^2$ error of the reconstruction is evaluated over $V_0$ with $800\times 800$ equally spaced grid. For more implementation details, we refer to \cite{ZG15}.

\begin{figure}
   \centering
   \subfloat[]{\includegraphics[width=.45\textwidth]{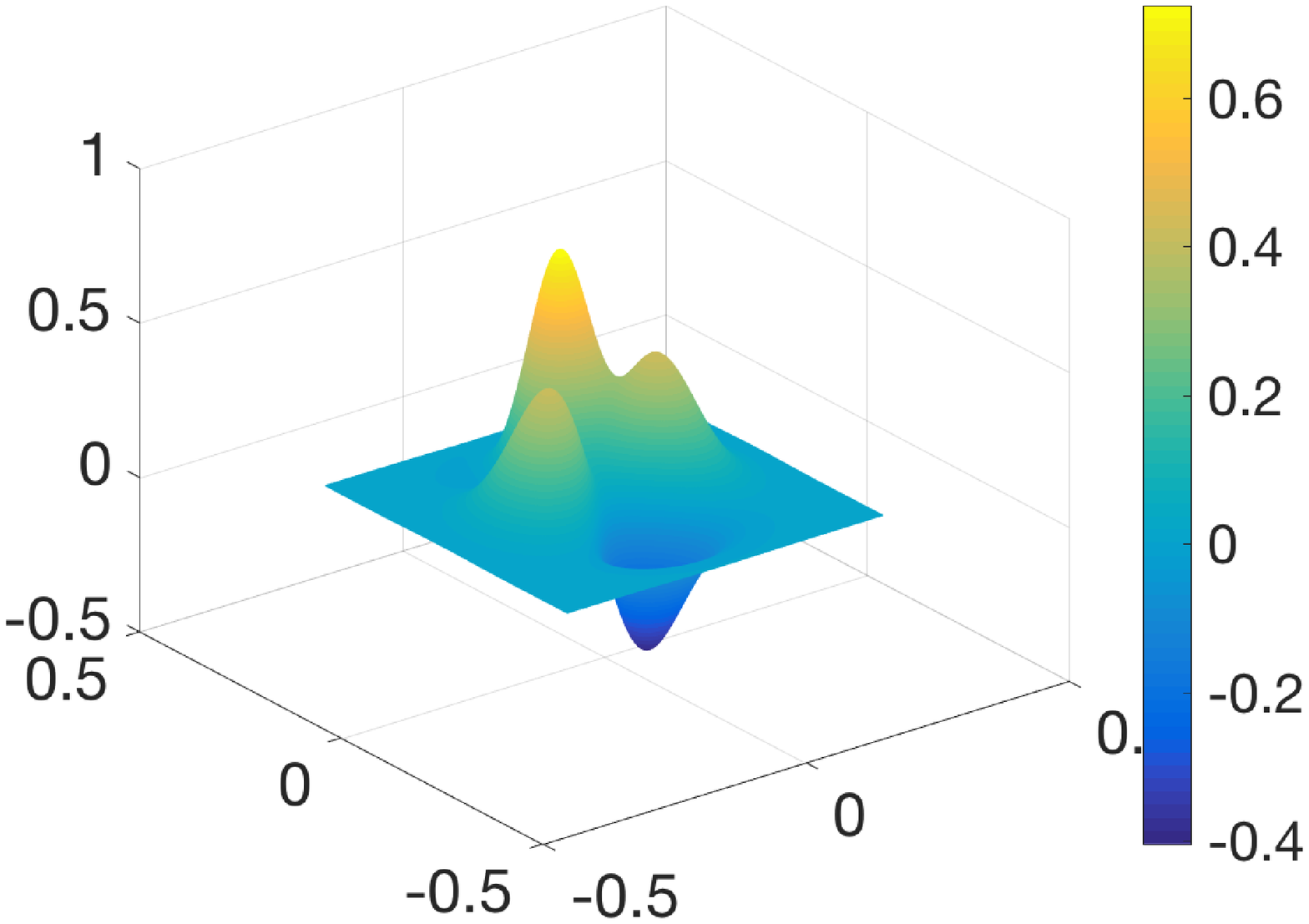}}
   \subfloat[]{\includegraphics[width=.45\textwidth]{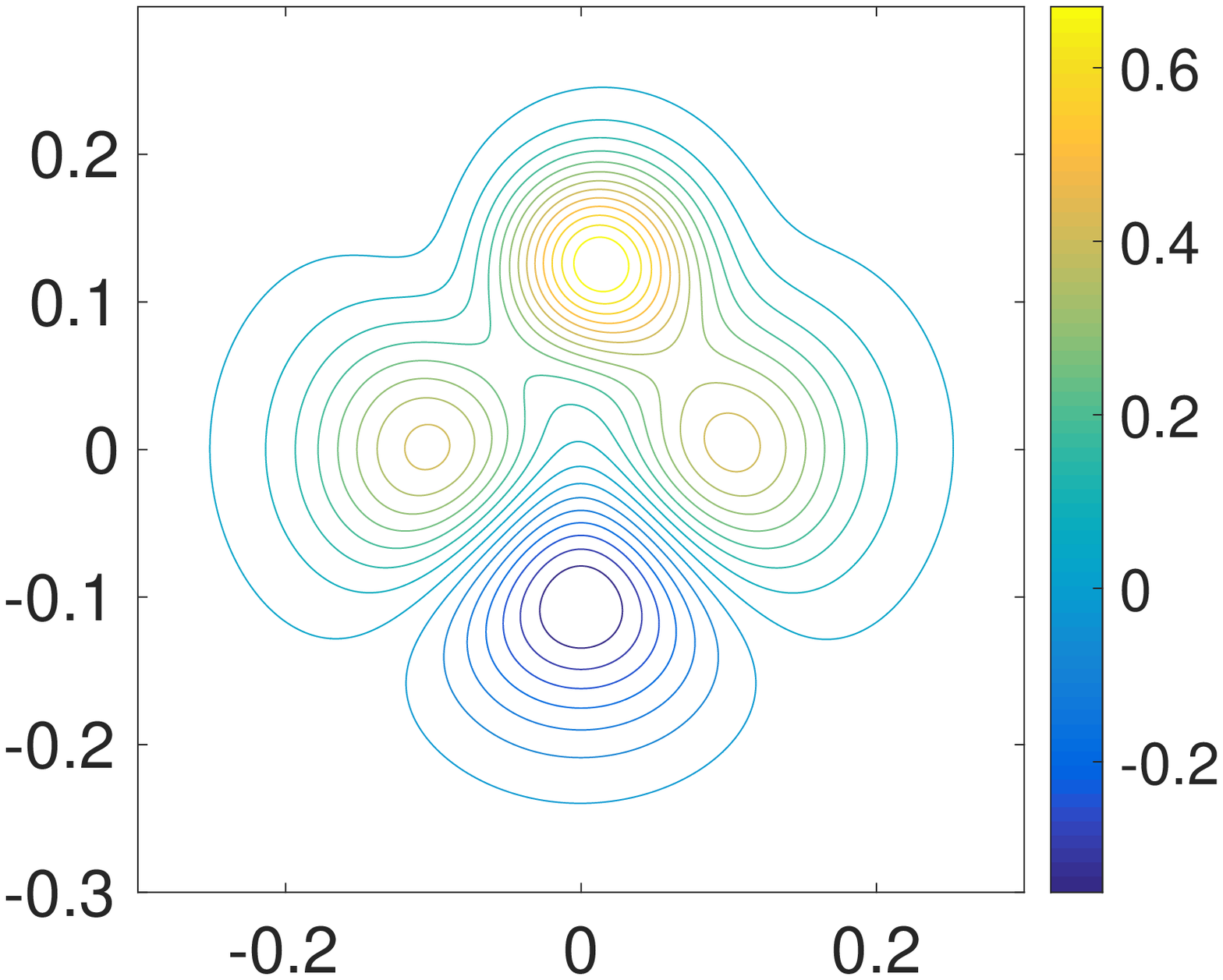}}
   \caption{The reconstruction of source $S$ with $1\%$ noise.  (a) surface plot (b) contour plot}
   \label{fig:reconstruction}
\end{figure}

The final reconstruction of the source $S$ with $1\%$ noise is shown in Figure \ref{fig:reconstruction}. Our experiment shows that, using the phaseless measurements polluted with $1\%$, $2\%$ and $5\%$ random noise, the relative $L^2$ error of the final source reconstruction is $4.60\%$, $5.82\%$ and $8.39\%$, respectively. As seen from subsection \ref{retrieval_result},  for moderately large noise level, say $5\%$, the retrieved radiated data is not very accurate. Nevertheless, the final reconstruction of the source function is not drastically affected and thus satisfactory. This salient capability is due to the fact that the Fourier coefficients are evaluated by computing integrals which could significantly filter the effect of noise. Therefore, the Fourier method is inherently tolerant to large noise.

\section{Conclusion}\label{sec:conclusion}

In this work,  we have proposed a numerical method for determining acoustic sources from multifrequency phaseless measurements. Our reconstruction procedure consists of two stages and it does not rely on any regularization or iteration. On the first stage the phase information of the measured data is retrieved via the newly introduced reference source technique. As a result, we only need to solve the standard phased inverse source problem on the second stage.

Since the Fourier method is applicable to the scalar 3D Helmholtz equation as well as the vector Maxwell's system \cite{WGZL17, WMGL17, WSGLL18}, our future work consists of extending the idea of phase retrieval to the 3D case and the vector electromagnetic model.

\section*{Acknowledgements}

The work of D. Zhang was supported by the NSFC grants under No. 11671170. The work of Y. Guo was supported by the NSFC grants under Nos. 11671111, 41474102 and 11601107. The work of J. Li was supported by the NSFC grant under 11571161 and the Shenzhen Sci-Tech Fund under JCYJ20160530184212170. The work of H. Liu was supported by the startup fund and FRG grants from Hong Kong Baptist University, Hong Kong RGC General Research Funds, No. 12302415 and 12302017. 


\end{document}